\documentclass{amsart}

\usepackage{newtxtext}

\usepackage[initials]{amsrefs}
\renewcommand{\MR}[1]{}
\usepackage{verbatim}
\usepackage{mathtools}
\usepackage{mathrsfs}
\usepackage[hidelinks]{hyperref}
\usepackage{tikz-cd}
\usetikzlibrary{decorations.pathmorphing}
\usepackage{amssymb}
\usepackage{xspace}
\usepackage{amsthm}

\theoremstyle{plain}
\newtheorem{theorem}{Theorem}[section]
\newtheorem{lemma}[theorem]{Lemma}
\newtheorem{corollary}[theorem]{Corollary}
\newtheorem{proposition}[theorem]{Proposition}
\newtheorem{conjecture}[theorem]{Conjecture}
\theoremstyle{definition}
\newtheorem{definition}[theorem]{Definition}
\theoremstyle{remark}

\newcommand{\shapes}{\mathcal{S}}
\newcommand{\orientedShapes}{\mathcal{S}_+}
\newcommand{\compactR}{\hat\R{}}
\newcommand{\pl}{\mathsf{PL}}
\newcommand{\Homeo}{\operatorname{Homeo}}
\newcommand{\vol}{\operatorname{vol}}
\newcommand{\sign}{\operatorname{sign}}
\newcommand{\diam}{\operatorname{diam}}

\newcommand{\Z}{\mathbb{Z}}
\newcommand{\R}{\mathbb{R}}
\newcommand{\C}{\mathbb{C}}
\newcommand{\composed}{{\mathchoice{\,{\scriptstyle\circ}\,}{{\scriptstyle\circ}}
    {{\scriptscriptstyle\circ}}{{\scriptscriptstyle\circ}}}}
\DeclareMathOperator{\Gr}{Gr}
\renewcommand{\d}{\ensuremath{\hspace{2pt} d}}

\title{Indecomposable Quasiconformal Maps of Manifolds}
\author{Benjamin B. McMillan}
\subjclass[2020]{Primary 30L10, 57Q25; Secondary 30C65, 57Q55}
\thanks{This work was supported by the Institute for Basic Science (IBS-R032-D1).}
\address{Center For Complex Geometry, Institute for Basic Science, 55 Expo-ro, Yuseong-gu 34126 Daejeon, South Korea}
\email{mcmillan@ibs.re.kr}

\begin{document}
\begin{abstract}
  We demonstrate the existence of quasiconformal mappings on closed manifolds (dimension at least \( 5 \)) that cannot be decomposed as a composition of mappings with arbitrarily small conformal distortion. This answers in the negative the global version of a classical question of Gehring.
\end{abstract}

\maketitle

\section{Introduction}

The aim of this paper is to show two related results, one positive and one negative, on quasiconformal mappings between Riemannian manifolds.
For both statements, fix two closed Riemannian manifolds \( M, N \) and consider quasiconformal homeomorphisms \( f \colon M \to N \).

The positive statement (Theorem \ref{thm: pl approximation of nearly conformal}) is that there exists a critical quasiconformal distortion \( K_{\pl} \) below which any \( M \to N \) can be arbitrarily well approximated by piecewise linear homeomorphisms with respect to the \( C^{0} \) topology.
In fact, the distortion \( K_{\pl} \) can be chosen to depend only on the dimension of \( M \).

The negative statement (Corollary \ref{thm-cor non factorable qc homeos}) uses the positive statement, in conjunction with classical results from differential topology, to demonstrate the existence of quasiconformal mappings \( f \colon M \to M \) that cannot be decomposed as a composition of functions with conformal distortion all below \( K_{\pl} \).
That is to say, in every factorization of \( f \) as a composition of quasiconformal mappings
\[ M \stackrel{f_{1}}{\longrightarrow} M \stackrel{f_{2}}{\longrightarrow} \ldots \stackrel{f_{r}}{\longrightarrow} M , \]
at least one of the \( f_{i} \) must have distortion more than \( K_{\pl} \).
It is worth noting that, due to the positive statement, this remains true even if one allows distinct Riemannian metrics on \( M \) at each step of the factorization.

The key differential topology fact used here (Theorem \ref{thm: non pl approximable homeo}) is the existence of manifolds \( M \) and quasiconformal mappings on \( M \) that cannot be approximated by a \( \pl \)-homeomorphism.
The manifolds \( M \) in question are not exotic, with domains such as \( S^{3} \times S^{n-3} \) sufficient to observe the phenomenon in each dimension \( n \ge 5 \).

Discussed more below, this paper answers the global version of a classical question in quasiconformal geometry. The local question, where \( M \) is an open domain in Euclidean space, remains open.

\subsection{History}

Given a domain \( U \subset \C \) and a mapping \( f \colon U \to \C \), one may measure the conformal distortion of \( f \) by its \emph{dilatation},
\[ D_{f} = \frac{|f_{z}| + |f_{\bar{z}}|}{|f_{z}| - |f_{\bar{z}}|} , \]
which is always at least \( 1 \).
For \( K \ge 1 \), one says that \( f \) is \emph{\( K \)-quasiconformal} if \( D_{f} \) is everywhere at most \( K \).
Quasiconformal maps are closed under composition.
In fact, letting \( K(f) \) denote the minimal bound on dilatation of \( f \), it holds for composable \( f,g \) that dilatation is submultiplicative,
\begin{equation}\label{eq: submultiplicativity of dilatation}
  K(f \composed g) \le K(f)K(g) .
\end{equation}

Maps of small bound on dilatation are close to being conformal, so it is an important feature of the two dimensional theory that any quasiconformal mapping can be decomposed as a composition of maps with arbitrarily small dilatation.
To establish terminology, say that a \emph{\( K \)-small factorization} of \( f \) is a decomposition
\[ f = f_{r} \composed \cdots \composed f_{1} \]
in which each \( f_{1} , \ldots , f_{r} \) is a \( K \)-quasiconformal homeomorphism between Riemannian manifolds.
Say that \( f \) is \emph{tame} if it admits a \( K \)-small factorization for all \( K > 1 \).
One may also ask for the stronger condition, \emph{a minimal \( K \)-small factorization}, for which it holds
\[ K(f) = K(f_{r}) \cdots K(f_{1}) . \]
The measurable Riemann mapping in \( 2 \) dimensions shows that all quasiconformal maps are tame, and in fact admit minimal factorizations.
A good reference for the \( 2 \)-dimensional theory is \cite{Ahlfors1966--LecturesQuasiconformalMappings}.


In the theory of higher dimensional quasiconformal mappings \cites{Gehring-Martin-Palka2017--IntroductionTheoryHigherDimensionalQuasiconformalMappings, Gehring2005--QuasiconformalMappingsEuclideanSpaces, Martin2014--TheoryQuasiconformalMappingsHigherDimensionsI, Vaisala1971--LecturesDimensionalQuasiconformalMappings} there are several distinct but qualitatively equivalent measures of dilatation.
The dilatations generally satisfy the submultiplicativity of Equation \ref{eq: submultiplicativity of dilatation}, so one can again ask whether quasiconformal maps admit small factorizations.
This has remained an important open question, going back to at least Gehring \cite{Gehring1987--TopicsQuasiconformalMappings}.

There exists some progress, militating in both directions.
In \cite{Fletcher-Markovic2012--Decomposing}, Fletcher and Markovic demonstrate that every continuously differentiable mapping of the sphere can be decomposed as a composition of maps of arbitrarily small isometric distortion.
Since small isometric distortion implies small conformal distortion, this demonstrates differentiable sphere mappings as tame.
In contrast, in \cite{He-Liu2019--FactoringHigherDimensionalQuasiconformalMappings}, He and Liu construct maps, starting in dimension \( 3 \), which admit no minimal factorizations for any of the linear, inner, or outer dilatation.
It is worth noting that their examples do admit small factorizations, just not minimal ones.

\subsection{Approximation by piecewise linear homeomorphisms}


Our examples of indecomposable mappings are based on the following Theorem, which relates nearly conformal mappings to piecewise linear mappings (\( \pl \) for short).

\begin{theorem}\label{thm: pl approximation of nearly conformal}
  In each dimension \( d \) there is a conformal dilatation \( K_{\pl} > 1 \) such that every \( K_{\pl} \)-quasiconformal mapping
  \[ f \colon M \to N \]
  between smooth closed Riemannian \( d \)-manifolds can be arbitrarily well approximated by \( \pl \)-homeomorphisms in the \( C^{0} \) function topology.

  Furthermore, for each \( K' > 1 \), there exists \( K > 1 \) such that the \( \pl \)-approximation of any \( K \)-quasiconformal mapping can be chosen to be \( K' \)-quasiconformal.
\end{theorem}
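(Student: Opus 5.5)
We will approximate \( f \) by a \( \pl \)-homeomorphism by triangulating \( M \) finely and then correcting \( f \) at the vertices. Fix smooth triangulations of \( M \) and \( N \), so that both carry \( \pl \) structures compatible with their smooth structures. These are unique up to \( \pl \)-homeomorphism by Whitehead's theorem. Take a very fine triangulation \( T \) of \( M \) whose simplices are uniformly "fat", meaning each simplex is bi-Lipschitz to a regular Euclidean simplex of the same size with a constant depending only on \( d \).

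The candidate approximation \( g \) is the simplexwise affine map, in charts of \( N \), that agrees with \( f \) on the vertices of \( T \). We must show that for \( K \) close to \( 1 \) this \( g \) is an embedding on each simplex and a local homeomorphism across faces. Since \( M \) is closed and \( g \) is \( C^0 \)-close to \( f \), it will then be a degree-one covering map of \( N \), hence a homeomorphism, and \( \pl \) by construction.

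The key input is a compactness and rigidity statement for nearly conformal maps. As \( K \to 1 \), \( K \)-quasiconformal maps of a Euclidean ball become uniformly close, after normalization, to Möbius transformations. In dimension \( d \ge 3 \) this is Liouville stability, due to Reshetnyak and Belinskii. In dimension \( 2 \) one uses compactness of normalized quasiconformal maps together with the fact that \( 1 \)-quasiconformal maps are conformal. At small scales a Möbius map restricted to a tiny ball is nearly a similarity. We rescale a small geodesic ball around a simplex of \( T \) to unit size. The metrics on \( M \) and \( N \) become nearly Euclidean, so the rescaled \( f \) is a \( K(1+o(1)) \)-quasiconformal map between nearly Euclidean balls. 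It is therefore \( \varepsilon(K) \)-close, uniformly, to a similarity, where \( \varepsilon(K) \to 0 \) as \( K \to 1 \). Consequently the images of the vertices of a fat simplex and of its neighbours form a nearly similar, hence still fat and consistently oriented, configuration. The affine extension then has derivative within \( \varepsilon' \) of a conformal linear map on each simplex.

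Choose \( K_{\pl} \) so that \( \varepsilon' \) is below the threshold, depending only on \( d \) and the fatness constant, at which nearly-similar affine images of the simplices around a vertex fit together without folding. With this choice, \( g \) is locally injective on each star. The \( C^0 \) distance from \( g \) to \( f \) is bounded by the mesh of \( T \) times a uniform constant, which handles the approximation. For the second statement, the linear dilatation of \( g \) on each simplex is bounded by a function of \( \varepsilon' \) that tends to \( 1 \). Given \( K' \), we choose \( K \) so that this bound is at most \( K' \). Since a \( \pl \) homeomorphism is quasiconformal with dilatation equal to the supremum over simplices, \( g \) is then \( K' \)-quasiconformal.

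The main obstacle is the uniform rigidity step: the scale at which \( f \) looks like a similarity must be controlled independently of the location and of \( f \). I would handle this by a contradiction and compactness argument. Suppose it fails. Then there is a sequence of \( K_j \)-quasiconformal maps with \( K_j \to 1 \) whose rescalings stay a definite distance from all similarities on the unit ball. The normalized rescalings are equicontinuous by the standard distortion estimates, so a subsequence converges. The limit is \( 1 \)-quasiconformal between Euclidean balls, hence Möbius, and after passing to a smaller concentric ball it is nearly a similarity. This contradicts the assumed definite distance. Note that closedness of \( M \) and \( N \) is what supplies uniform curvature bounds, and with them the nearly Euclidean rescaled metrics.
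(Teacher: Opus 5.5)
Your proposal is correct and follows essentially the same route as the paper. Both arguments use uniformly full fine triangulations and a compactness argument showing that maps of dilatation near \(1\) are close to M\"obius maps, hence to similarities, at small scales. The simplexwise-affine (secant) approximation therefore sends every small simplex to a full, positively oriented one, and this yields a \( \pl \)-homeomorphism with simplexwise-controlled dilatation; the paper counts global degree where you use a star-by-star no-folding step, but these are equivalent.

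Two small repairs are needed. First, ``affine in charts of \(N\)'' does not glue continuously across overlapping charts, so define \(g\) as the paper does: interpolate in an ambient embedding \(N \subset \R^{n}\) and then apply the nearest-point projection. Second, the \(C^{0}\) error is controlled by the modulus of continuity of \(f\) at the mesh scale, not by a constant times the mesh, since quasiconformal maps are only H\"older continuous.
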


The proof of Theorem \ref{thm: pl approximation of nearly conformal} is given in Section \ref{sec: pl approximation of nearly conformal}.
We recall more details of \( \pl \)-manifolds in Section \ref{sec: background pl mappings}.
Suffice it to say here that it is Whitehead's Theorem \cite{Whitehead1940--C1Complexes} that a smooth manifold structure on \( M \) determines an underlying compatible \( \pl \)-structure, and a map \( f \colon M \to N \) between smooth manifolds is \( \pl \) if it pushes forward the compatible \( \pl \)-structure of \( M \) to that of \( N \).

To say that \( f \) is \( C^{0} \) approximated by \( \pl \)-homeomorphisms means that for each \( \epsilon > 0 \) there exists a \( \pl \)-homeomorphism \( f_{\pl} \) that is \( \epsilon \)-close to \( f \),
\[ d(f(x), f_{\pl}(x)) < \epsilon \quad \mbox{ for all } \quad x \in M . \]

Shikata \cite{Shikata1966--DistanceFunctionSetDifferentiableStructures} has shown a similar result for bilipschitz homeomorphisms.
Our argument is similar to that in \cite{Shikata1966--DistanceFunctionSetDifferentiableStructures}, but generalized to quasiconformal mappings.
The point in both cases is that there is a uniform bound (in each dimension) on the quality of simplices that are required to triangulate a smooth Riemannian manifold.
One then shows that maps of sufficiently small dilatation (isometric or conformal) preserve simplex quality, enough that small simplicial approximations can be shown to be homeomorphisms.

In any case, the approximability by \( \pl \)-homeomorphisms is only non-trivial in high dimensions;
in dimensions up to \( 3 \) every homeomorphism can be approximated by a \( \pl \)-homeomorphism, a result going back to Moise \cite{Moise1952--AffineStructuresManifoldsIV}.
But the Theorem has real content in higher dimensions, where there do exist homeomorphisms that cannot be approximated by \( \pl \)-homeomorphisms.

\begin{theorem}\label{thm: non pl approximable homeo}
  In each dimension \( d \ge 5 \) there exists a smooth manifold \( M \), an orientation preserving homeomorphism \( f \colon M \to M \), and an \( \epsilon > 0 \) so that no \( \pl \)-homeomorphism is \( \epsilon \)-close to \( f \).
  The homeomorphism \( f \) may be chosen to be quasiconformal.
\end{theorem}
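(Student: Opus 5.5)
The plan is to use the Kirby--Siebenmann obstruction theory for homeomorphisms, together with Sullivan's theorem that topological manifolds of dimension \( \ne 4 \) admit Lipschitz (hence quasiconformal) structures, unique up to isotopy. I would take \( M = S^{3} \times S^{d-3} \) with \( d \ge 5 \), or more generally any closed smooth \( M \) with \( H^{3}(M;\Z/2) \ne 0 \).

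The first step recalls the classification of \( \pl \)-structures. For \( d \ge 5 \), concordance classes of \( \pl \)-structures on a topological manifold \( M \) are in bijection with \( H^{3}(M;\Z/2) \), since \( \operatorname{TOP}/\pl \simeq K(\Z/2,3) \). Moreover, concordance implies isotopy in this range. Next, take a \( \pl \)-structure \( \Sigma' \) on \( M \) whose class differs from the standard Whitehead structure \( \Sigma \) by a nonzero \( a \in H^{3}(M;\Z/2) \). For \( M = S^{3}\times S^{d-3} \) there is a topological self-homotopy equivalence detecting \( a \): by Kirby--Siebenmann, \( (M,\Sigma') \) is \( \pl \)-homeomorphic to \( (M,\Sigma) \). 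This is because \( S^{3}\times S^{d-3} \) has a unique \( \pl \)-structure up to \( \pl \)-homeomorphism, as the fake structures are all realized by self-homeomorphisms obtained by twisting along \( S^{3}\times D^{d-3} \) using a non-\( \pl \)-isotopic-to-identity homeomorphism of \( S^{3}\times D^{d-3} \) rel boundary, or equivalently via the Kirby--Siebenmann invariant of the mapping torus. Let \( h\colon (M,\Sigma)\to(M,\Sigma') \) be this \( \pl \)-homeomorphism, and set \( f = h \) viewed as a map \( M\to M \) of the underlying topological manifold. Then \( f^{*}\Sigma \ne \Sigma \) in the concordance classification, with difference class \( a \ne 0 \). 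Composing with a reflection if needed makes \( f \) orientation preserving.

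Non-approximability follows from the isotopy statement. Suppose \( g \) is a \( \pl \)-homeomorphism that is \( \epsilon \)-close to \( f \), with \( \epsilon \) smaller than a convexity radius. Then \( g^{-1}\composed f \) is \( C^{0} \)-close to the identity. By Chernavskii/Kirby local contractibility of \( \Homeo(M) \), it is therefore topologically isotopic to the identity, so \( f \) is isotopic to the \( \pl \)-map \( g \). This forces \( f^{*}\Sigma \) to be isotopic, hence concordant, to \( \Sigma \), which contradicts \( a\ne 0 \). The \( \epsilon \) comes from the contractibility neighbourhood.

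For quasiconformality, I would use Sullivan's theorem on uniqueness of Lipschitz structures in dimension \( \ne 4 \). The Lipschitz structures underlying \( \Sigma \) and \( \Sigma' \) are isotopic, so after composing \( f \) with a homeomorphism isotopic to the identity we may assume \( f \) is bilipschitz for the Riemannian metric, hence quasiconformal. Isotopy does not change the obstruction class, so the argument above still applies. The main obstacle is the middle step: producing a self-homeomorphism of a specific \( M \) with nonzero Kirby--Siebenmann difference class. I would first try the explicit twist on \( S^{3}\times S^{d-3} \) via a homeomorphism of \( T^{3}\times D^{d-3} \)-type local models, or cite the known isomorphism \( \pi_{0}\Homeo(M)/\pi_{0}\operatorname{PL}(M) \supset H^{3}(M;\Z/2) \).
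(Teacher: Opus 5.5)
Your outline matches the paper's: two non-concordant \( \pl \)-structuralizations of \( S^{3}\times S^{d-3} \), local contractibility of \( \Homeo(M) \) to turn ``not isotopic to a \( \pl \)-homeomorphism'' into ``not \( \epsilon \)-approximable'', and a reflection for orientation. But the step you yourself flag as the main obstacle is never supplied, and it is the crux.

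Kirby--Siebenmann only give you a structure \( \Sigma' \) not concordant to \( \Sigma \). To get a \emph{self-homeomorphism} with nonzero difference class you need \( (M,\Sigma') \) to be \( \pl \)-homeomorphic to \( (M,\Sigma) \), and that does not follow from their classification. Your justification is circular: ``the fake structures are realized by self-homeomorphisms'' is the claim itself. The mechanism you propose also cannot work in general. A homeomorphism supported in \( S^{3}\times D^{d-3} \) changes the class only by an element in the image of \( H^{3}(S^{3}\times D^{d-3},S^{3}\times S^{d-4};\Z/2)\cong H^{6-d}(S^{3};\Z/2) \), which is zero unless \( d=6 \). The ``known'' inclusion \( H^{3}(M;\Z/2)\subset\pi_{0}\Homeo(M)/\pi_{0}\pl(M) \) is again the claim in disguise. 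The image of \( \pi_{0}\Homeo(M) \) in \( H^{3}(M;\Z/2) \) is exactly the set of classes whose structures are \( \pl \)-homeomorphic to \( \Sigma \). Your aside that any closed smooth \( M \) with \( H^{3}(M;\Z/2)\neq 0 \) would do is false. For \( M=T^{d} \) the nonzero classes are fake tori, which are not \( \pl \)-homeomorphic to \( T^{d} \). So every self-homeomorphism of \( T^{d} \) is isotopic to a \( \pl \) one, and your method produces nothing there.

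The missing ingredient, and what the paper uses, is Sullivan's Hauptvermutung theorem. On a closed simply connected \( \pl \)-manifold of dimension \( \ge 5 \) with no \( 2 \)-torsion in \( H_{3} \), every homeomorphism is homotopic to a \( \pl \)-homeomorphism. Hence \( S^{3}\times S^{d-3} \) has a unique \( \pl \)-structure, and any non-concordant pair of structuralizations \( h_{1},h_{2} \) yields \( f=h_{2}^{-1}\composed h_{1} \) not isotopic to a \( \pl \)-homeomorphism. (In surgery terms, the normal invariant of the structure with class \( a \) is governed by the integral Bockstein of \( a \), a \( 2 \)-torsion element of \( H^{4}(M;\Z) \), which vanishes here.) With that inserted, the rest of your argument is fine.

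Your quasiconformal step, via Sullivan's uniqueness of Lipschitz structures up to isotopy, is a legitimate alternative to the paper's. The paper simply \( C^{0} \)-approximates \( f \) within \( \epsilon/2 \) by a quasiconformal map (Sullivan, Tukia--V\"ais\"al\"a) and keeps half the margin. You instead stay in the isotopy class and even obtain a bilipschitz representative, at the price of re-choosing \( \epsilon \) from the local-contractibility neighbourhood of the new map.
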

This Theorem follows by combining several classical results in differential topology, recalled in Section \ref{sec: non plizable mappings}.

\subsection{Indecomposable quasiconformal mappings}


It is easy to construct indecomposable maps by combining Theorems \ref{thm: pl approximation of nearly conformal} and \ref{thm: non pl approximable homeo}.

\begin{corollary}\label{thm-cor non factorable qc homeos}
  There exists a manifold \( M \) and a quasiconformal homeomorphism \( f \colon M \to M \) such that \( f \) does not admit a \( K_{\pl} \)-small factorization relative to any Riemannian metric on \( M \).
\end{corollary}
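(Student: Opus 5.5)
The plan is to take \( M \), \( f \) and \( \epsilon \) from Theorem \ref{thm: non pl approximable homeo}, with \( f \) quasiconformal, and argue by contradiction. Suppose \( f = f_{r} \composed \cdots \composed f_{1} \) is a \( K_{\pl} \)-small factorization. Each \( f_{i} \colon (M, g_{i-1}) \to (M, g_{i}) \) is a \( K_{\pl} \)-quasiconformal homeomorphism, where the \( g_{i} \) are smooth Riemannian metrics on the fixed smooth manifold \( M \). To use Theorem \ref{thm: pl approximation of nearly conformal} I need closed smooth Riemannian manifolds, so I will take \( M \) closed (e.g.\ \( S^{3}\times S^{d-3} \)) and assume the intermediate metrics are smooth Riemannian metrics on the same smooth manifold, as the phrase ``relative to any Riemannian metric'' suggests. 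The key point is that the compatible \( \pl \)-structure from Whitehead's Theorem depends only on the smooth structure, not on the metric. So a \( \pl \)-homeomorphism \( (M,g_{i-1}) \to (M,g_{i}) \) is a \( \pl \)-self-homeomorphism of \( M \) with its fixed \( \pl \)-structure, and compositions of such maps stay \( \pl \).

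By Theorem \ref{thm: pl approximation of nearly conformal}, each \( f_{i} \) has \( \pl \)-approximations \( h_{i} \) that are \( \delta_{i} \)-close to \( f_{i} \) for any chosen \( \delta_{i} > 0 \). Next I control the error in the composition. All metrics \( g_{i} \) live on a compact manifold, so they are pairwise bi-Lipschitz equivalent, and each \( f_{i} \) is uniformly continuous. I measure everything in the final metric \( g_{r} \), where \( \epsilon \) is given; if the \( \epsilon \) of Theorem \ref{thm: non pl approximable homeo} is relative to another metric, I convert it to one for \( g_{r} \) using compactness. Then I choose the \( \delta_{i} \) backwards by induction. I require \( h_{r} \) to be \( \epsilon/2 \)-close to \( f_{r} \). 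By uniform continuity of \( f_{r} \) there is \( \eta_{r-1} \) such that points \( \eta_{r-1} \)-close in \( g_{r-1} \) map \( \epsilon/2 \)-close. The composite \( h_{r-1}\composed\cdots\composed h_{1} \) then only needs to be \( \eta_{r-1} \)-close to \( f_{r-1}\composed\cdots\composed f_{1} \), and I recurse. The triangle inequality
\[ d(h_{r} h'(x), f_{r} f'(x)) \le d(h_{r} h'(x), f_{r} h'(x)) + d(f_{r} h'(x), f_{r} f'(x)) \]
drives the induction. Here \( h' = h_{r-1}\composed\cdots\composed h_{1} \) and \( f' = f_{r-1}\composed\cdots\composed f_{1} \).

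The result is a \( \pl \)-homeomorphism \( h = h_{r}\composed\cdots\composed h_{1} \) that is \( \epsilon \)-close to \( f \), contradicting Theorem \ref{thm: non pl approximable homeo}. The main obstacle is the bookkeeping across different metrics, and confirming that the \( \pl \)-structure is independent of the metric and fixed along the factorization. The \( \epsilon \)-closeness estimate itself is routine once uniform continuity and compactness are used.
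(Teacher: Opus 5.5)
Your proposal is correct and is essentially the paper's proof: approximate each factor \( f_{i} \) by a \( \pl \)-homeomorphism using Theorem \ref{thm: pl approximation of nearly conformal}, use continuity of \( r \)-fold composition to make the composite \( \pl \)-homeomorphism \( \epsilon \)-close to \( f \), and contradict Theorem \ref{thm: non pl approximable homeo}. Your backward choice of tolerances from the moduli of continuity of the fixed factors is an explicit proof of continuity of composition at the given factorization, where the paper instead invokes the topological group structure of \( \Homeo(M) \); it has the merit of making the tolerance visibly depend on the factorization and of handling a different metric at each step.
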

\begin{proof}
  Let \( f \colon M \to M \) be any quasiconformal homeomorphism that is not \( \pl \)-approximable, as in Theorem \ref{thm: non pl approximable homeo}, and let \( \epsilon > 0 \) be small enough that every \( \pl \)-homeomorphism of \( M \) is at least \( \epsilon \) away from \( f \).

  Because \( M \) is compact, the \( C^{0} \)-metric on homeomorphisms, given by
  \[ d(f, g) = \sup\{ d(f(x), g(x)) \colon x \in M \} , \]
  defines a topological group structure on \( \Homeo(M) \).
  This means in particular that the multiplication map is continuous, and so are the higher order multiplication maps (\( r \)-fold composition)
  \[ \mu_{r} \colon \Homeo(M)^{r} \to \Homeo(M) . \]
  By continuity, there exists a \( \delta > 0 \) small enough that any two lists of \( r \) elements \( (f_{1}, \ldots, f_{r}) \) and \( (f'_{1}, \ldots, f'_{r}) \) such that each \( d(f_{i}, f'_{i}) < \delta \), one has
  \[ d(f_{r} \composed \cdots \composed f_{1}, f'_{r} \composed \cdots \composed f'_{1}) < \epsilon . \]

  Now suppose that \( f \) admitted a \( K_{\pl} \)-small factorization
  \[ f = f_{r} \composed \cdots \composed f_{1} . \]
  By Theorem \ref{thm: pl approximation of nearly conformal}, there would exist for each \( f_{i} \) a \( \pl \)-homeomorphism \( f'_{i} \) that is \( \delta \)-close to \( f_{i} \).
  The composition of the \( f'_{i} \) would thus be a \( \pl \)-homeomorphism that is \( \epsilon \)-close to \( f \), a contradiction.
\end{proof}

\subsection{Structure of paper}

The first two sections of this paper are background, with Section \ref{sec: background qc mappings} on quasiconformal mappings and Section \ref{sec: background pl mappings} on piecewise linear mappings.

Section \ref{sec: preservation of shapes} concerns finite configurations of points in \( \R^{d} \) up to conformal action.
The key Lemma is \ref{thm-lem small dilatation preserves shapes}, showing that maps of small dilatation don't distort the shape of small configurations too much.
More precisely, given two disjoint compact subsets of configurations, there exists a sufficiently small dilatation \( K \) below which the image of small configurations in one set remain disjoint from the other.
An immediate application is to the case of \( d \)-simplices;
fixing a set of positively oriented simplices bounded away from being degenerate, the Lemma shows that there exists a dilatation below which all such simplices in \( \R^{d} \) are sent to positively oriented simplices.

To apply this Lemma, Section \ref{sec: full triangulations} covers a particular measure of non-degeneracy for simplices, the fullness.
For simplices in a Riemannian manifold there are two natural definitions of fullness, which need be reconciled.
In any case, in each dimension, there exists a lower bound on fullness \( \Theta_{d} \) such that every closed orientable Riemannian manifold admits arbitrarily fine triangulations with all simplices of fullness at least \( \Theta_{d} \).
This statement is used in Section \ref{sec: pl approximation of nearly conformal} to prove Theorem \ref{thm: pl approximation of nearly conformal}.
Finally, Section \ref{sec: non plizable mappings} is devoted to the proof of Theorem \ref{thm: non pl approximable homeo}.

\subsection{Discussion}

The indecomposability result obtained here is global in nature, and it remains to understand the local description.
For example, it remains open the question of whether there exist small factorizations of every quasiconformal mapping from the open ball into \( \R^{d} \).
The \( \pl \)-approximation statements here can, with more work, be extended to local maps of small dilatation, and it may well be that an assumption of local factorizability would lead to a similar contradiction as here.

To briefly expand on this, it is understood \cites{Sullivan1979--HyperbolicGeometryHomeomorphisms, Tukia-Vaeisaelae1981--LipschitzQuasiconformalApproximationExtension} that every topological manifold admits a quasiconformal structure, with charts whose transition functions are quasiconformal.
Define a potentially restricted class of manifolds, \emph{tame quasiconformal}, whose transitions are tame quasiconformal mappings.
Based on the results here, it is reasonable to make the following conjecture.
\begin{conjecture}
  The category of tame quasiconformal manifolds is the same as the category of \( \pl \)-manifolds.
\end{conjecture}
In other words, one might use the local small dilatation decompositions to `clean up' the transition functions, approximating chart by chart to a \( \pl \)-structure on each tame quasiconformal manifold.

If the conjecture held, it would mean in particular that there exist quasiconformal but non-tame manifolds, whose transition functions would demonstrate local indecomposable quasiconformal mappings.
However, we leave the pursuit of this idea to future work.

\subsection{Acknowledgements}
We would like to acknowledge Dennis Sullivan and Michael Freedman for many helpful discussions on the subject of this paper.
In particular, the paper would not have happened without Sullivan's suggestion for the overall approach here.

No AI tools were used in the preparation or writing of this article, with the exception of a last-pass spelling and grammar check.

\section{Quasiconformal mappings}\label{sec: background qc mappings}
\subsection{Quasiconformal mappings}
Fix two Riemannian manifolds \( M, N \), with distance metrics \( d_{M}, d_{N} \).
\begin{definition}
  For a homeomorphism \( f \colon M \to N \),
  the \emph{conformal distortion} of \( f \) at \( x \in M \) is
  \begin{equation}\label{eq: defn of pointwise linear dilatation}
    K(f,x) := \limsup_{r\to 0} \frac{\sup\bigl\{ d_{N} \bigl(f(x),f(y)\bigr) \bigm| y \in M \mbox{ s.t. } {d_{M}(x,y)\le r} \bigr\}}{\inf\bigl\{ d_{N} \bigl(f(x),f(y)\bigr) \bigm| y \in M \mbox{ s.t. } {d_{M}(x,y) \ge r} \bigr\}} .
  \end{equation}
  Say that \( f \) is \emph{quasiconformal} if there exists a constant uniformly bounding \( K(f,x) \) for all \( x \in M \).

  The \emph{linear dilatation} \( K(f) \) of a quasiconformal map \( f \) is given by the essential supremum of \( K(f,x) \) across the domain of \( f \).
\end{definition}
The minimum possible dilatation is 1, attained by conformal mappings.
A map \( f \) with dilatation close to 1 is nearly conformal, and sends the infinitesimal spheres about each point \( x \) into thin infinitesimal annuli about \( f(x) \).

There are several well-known alternative definitions of conformal dilatation, \cites{Vaisala1971--LecturesDimensionalQuasiconformalMappings, Gehring1962--RingsQuasiconformalMappingsSpace, Gehring-Martin-Palka2017--IntroductionTheoryHigherDimensionalQuasiconformalMappings}, which are generally commensurate.
For each definition, the composition of quasiconformal maps \( f, g \) is again quasiconformal, and the dilatation is submultiplicative, as in Equation \eqref{eq: submultiplicativity of dilatation}.
In particular, if \( g \) has dilatation \( 1 \), then its inverse does too, and \( K(f \composed g) = K(f) \) and \( K(g \composed f) = K(f) \).

\subsection{Bilipschitz mappings}
There is a stronger condition, corresponding to isometries.
\begin{definition}
  A map \( f \colon M \to N \) between metric manifolds is \emph{bilipschitz}, with \emph{(isometric) dilatation} \( K \), if
  \[ K^{-1}d_{M}(x,y) \le d_{N}(f(x), f(y)) \le K d_{M}(x,y) \quad \mbox{ for all } \quad x, y \in M . \]
\end{definition}
A \( K \)-bilipschitz mapping is immediately seen to be \( K^{2} \)-quasiconformal.
As a typical example of bilipschitz mappings, recall the following standard result from Riemannian geometry.
\begin{lemma}\label{thm: lem small exponentials are near isometric}
  For any dilatation \( K > 1 \), there exists a radius \( \delta > 0 \) so that for every point \( x \in M \), the exponential map \( \exp_{x} \colon T_{x} M \to M \) is \( \sqrt{K} \)-bilipschitz when restricted to the ball of radius \( \delta \) about the origin.
  The restriction is thus \( K \)-quasiconformal.
\end{lemma}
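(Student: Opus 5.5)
The plan is to compare \( \exp_{x} \) with the identity on \( T_{x}M \), using normal coordinates and the compactness of \( M \). (The statement implicitly assumes \( M \) is compact, as in the closed manifolds of the paper; for a general \( M \) one would restrict to a compact subset.) The key fact is that \( d(\exp_{x})_{v} \) is a linear map \( T_{x}M \to T_{\exp_{x} v}M \) that depends continuously on \( (x,v) \) in \( TM \), and equals the identity at \( v = 0 \).

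First I use the unit sphere bundle. It is compact, and the map \( (x,v,w) \mapsto |d(\exp_{x})_{v} w| \), with \( |w| = 1 \), is continuous. Hence for any \( \eta > 0 \) there is a radius \( \delta_{0} \), uniform in \( x \), such that for \( |v| < \delta_{0} \) the operator norm of \( d(\exp_{x})_{v} \) and of its inverse are both at most \( 1 + \eta \). I also shrink \( \delta_{0} \) below the injectivity radius of \( M \), which is positive by compactness. Then \( \exp_{x} \) is a diffeomorphism from the ball \( B(0,\delta_{0}) \) onto the geodesic ball \( B(x,\delta_{0}) \).

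Second, I derive the bilipschitz bounds from these pointwise bounds. This is the step that needs care, because the Riemannian distance in \( M \) is realized by curves that might leave the image ball.
\begin{itemize}
  \item \textbf{Upper bound.} For \( v,w \) in \( B(0,\delta) \), the straight segment between them stays in the convex ball. Its image has length at most \( (1+\eta)|v-w| \), which gives \( d(\exp_{x} v, \exp_{x} w) \le (1+\eta)|v-w| \).
  \item \textbf{Lower bound.} Take \( \delta \le \delta_{0}/4 \). The points \( p = \exp_{x} v \) and \( q = \exp_{x} w \) are then within \( \delta_{0}/2 \) of each other, so a minimizing curve between them has length less than \( \delta_{0}/2 \). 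Such a curve stays inside \( B(x,\delta_{0}) \) by the triangle inequality. Pulling it back by \( \exp_{x}^{-1} \) gives a curve from \( v \) to \( w \) of length at most \( (1+\eta) \) times its length. Hence \( |v-w| \le (1+\eta)\, d(p,q) \).
\end{itemize}
Choosing \( 1+\eta = \sqrt{K} \) makes the restriction \( \sqrt{K} \)-bilipschitz.

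Finally, the remark in the paper that a \( K' \)-bilipschitz map is \( K'^{2} \)-quasiconformal gives the last sentence. To check it: in the ratio defining \( K(f,x) \), the numerator is at most \( K' r \) and the denominator at least \( r/K' \), so the ratio is at most \( K'^{2} \). This also needs a small check that points at distance \( \ge r \) in the domain ball map to points at distance \( \ge r/K' \), which holds for small \( r \) by the lower bipschitz bound.

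The main obstacle is the uniformity of \( \delta \) in \( x \), and this comes from compactness. The smaller point of care is the lower bound, where one must confine minimizing curves to the chart, which the shrinking \( \delta \le \delta_{0}/4 \) handles.
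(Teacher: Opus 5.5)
Your proof is correct and follows the same route as the paper, which justifies the lemma only by remarking that \( \exp_{x} \) is a diffeomorphism near the origin and an isometry at the origin. You make this rigorous by using compactness to get \( \delta \) uniform in \( x \), and by confining minimizing curves to the normal ball so that the lower Lipschitz bound holds, both of which the paper leaves implicit.
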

This holds because the exponential map is a diffeomorphism near the origin and an isometry at the origin.

\subsection{M\"obius transformations}
Let \( \compactR^{d} \) denote the one point compactification of \( \R^{d} \), equipped with the spherical metric.
The group of conformal transformations of \( \compactR^{d} \)---the \emph{M\"obius group}---is generated by translations, scalings, rotations fixing the origin and sphere inversions in \( \R^{d} \), each extended to \( \compactR^{d} \) in the obvious manner.
In particular, the orientation preserving M\"obius transformations of \( \compactR^{d} \) that fix the point at infinity form the group of \emph{similarities} of \( \R^{d} \): compositions of translations, scalings and rotations fixing the origin.

\subsection{Compactness of quasiconformal mappings}

We will need the following theorem on compactness of the family of quasiconformal mappings.
Let \( D \) be an open domain in \( \R^{d} \).
A sequence of continuous functions \( f_{i} \colon D \to \R^{d} \) converges \emph{locally uniformly} to \( f \) if for each compact set \( K \subset D \), the sequence of restrictions \( f_{i}|_{K} \) converges uniformly to \( f|_{K} \).
The next Theorem follows, for example, from Theorem 6.6.26 of \cite{Gehring-Martin-Palka2017--IntroductionTheoryHigherDimensionalQuasiconformalMappings}.
\begin{theorem}\label{thm-convergence to a Mobius}
  Suppose given a sequence of quasiconformal mappings
  \[ f_{i} \colon B(0,R) \to \R^{d} \]
  such that \( K(f_{i}) \to 1 \).
  If there are two distinct points \( x_{1}, x_{2} \in B(0,R) \) and corresponding \( y_{1}, y_{2} \in \R^{d} \) so that each \( f_{i} \) sends \( x_{a} \) to \( y_{a} \) (with \( a = 1,2 \)), then the sequence \( f_{i} \) has a subsequence that converges locally uniformly to the restriction of a M\"obius transformation that sends each \( x_{a} \) to \( y_{a} \).
\end{theorem}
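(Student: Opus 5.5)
The plan is the classical normal-family argument, followed by Liouville's theorem. I would take the cited results from the quasiconformal literature (Väisälä, Gehring–Martin–Palka) as black boxes.

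\emph{Step 1: a convergent subsequence.} Since \( K(f_{i}) \to 1 \), we may assume every \( f_{i} \) is \( K_{0} \)-quasiconformal for one fixed \( K_{0} \), say \( K_{0} = 2 \). Each \( f_{i} \) is a homeomorphism onto its image, so injectivity forces \( y_{1} \neq y_{2} \). View the \( f_{i} \) as maps into \( \compactR^{d} \) with the spherical metric. The family of \( K_{0} \)-quasiconformal embeddings of \( B(0,R) \) into \( \compactR^{d} \) is equicontinuous if it satisfies two conditions:
\begin{itemize}
  \item the maps omit \( \infty \);
  \item the maps send \( x_{1}, x_{2} \) to the points \( y_{1}, y_{2} \), which are at positive spherical distance from each other and from \( \infty \).
\end{itemize}
This is the two-point-plus-omitted-value normality criterion for quasiconformal embeddings. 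It follows from the modulus estimate for rings: a small ball about a point \( z \) whose image had large spherical diameter would produce a ring separating \( \{f(z), \text{nearby points}\} \) from two of the three well-separated points \( y_{1}, y_{2}, \infty \). Such a ring has small modulus, while quasi-invariance of modulus forces its modulus to be large. Arzel\`a--Ascoli on an exhaustion of \( B(0,R) \) by compact balls, together with a diagonal argument, then gives a subsequence converging locally uniformly in the spherical metric to some \( f \colon B(0,R) \to \compactR^{d} \).

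\emph{Step 2: the limit is a nonconstant embedding into \( \R^{d} \).} The standard limit theorem for quasiconformal mappings says a locally uniform limit of \( K_{0} \)-quasiconformal embeddings is either a constant or a \( K_{0} \)-quasiconformal embedding. Here \( f(x_{1}) = y_{1} \neq y_{2} = f(x_{2}) \), so the limit is not constant and is an embedding. Since \( f \) is an embedding, it is open, so its image cannot contain \( \infty \): otherwise the image would be a neighborhood of \( \infty \), and nearby \( f_{i} \) would already take values near \( \infty \) on a fixed compact set. That contradicts the uniform bound furnished by Step 1 on compacta. Hence \( f \) maps into \( \R^{d} \) and the convergence is locally uniform in the Euclidean metric. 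Applying the same limit theorem with every \( K > 1 \) in place of \( K_{0} \) (valid since eventually \( K(f_{i}) \le K \)) shows \( f \) is \( K \)-quasiconformal for all \( K > 1 \). Hence \( f \) is \( 1 \)-quasiconformal.

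\emph{Step 3: Liouville.} By Gehring's version of Liouville's theorem, in dimension \( d \ge 3 \) every \( 1 \)-quasiconformal embedding of a domain is the restriction of a M\"obius transformation. The fixed-point conditions pass to the limit, so this transformation sends \( x_{a} \) to \( y_{a} \). In dimension \( 2 \) the corresponding statement only gives a conformal embedding. This is consistent with the theorem being used in the paper only in high dimension, so I would state and apply it for \( d \ge 3 \).

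I expect the main obstacle to be Step 1, the equicontinuity from only two normalization points plus the omitted value \( \infty \). This is where the modulus estimates genuinely enter. I would simply cite the normal-family theorem, Theorem 6.6.26 of \cite{Gehring-Martin-Palka2017--IntroductionTheoryHigherDimensionalQuasiconformalMappings}, rather than reprove it.
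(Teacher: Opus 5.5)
Your route (spherical normality, closure of \( K \)-quasiconformal embeddings under locally uniform limits, then Gehring's Liouville theorem) is the standard argument behind the paper's bare citation of Theorem 6.6.26 of Gehring--Martin--Palka. Steps 1 and 3 are sound. In particular your ring argument for equicontinuity works: for \( s < |x_{1}-x_{2}|/2 \), the outer complementary component of \( f(B(z,s)\setminus \bar B(z,r)) \) always contains \( \infty \) and at least one of \( y_{1}, y_{2} \).

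The gap is in Step 2, where you exclude \( \infty \) from the image of the limit \( f \). Step 1 gives only equicontinuity in the spherical metric, and this furnishes no Euclidean bound on compacta, since every map into the sphere is spherically bounded. A Euclidean bound on compacta is what you obtain \emph{after} knowing that \( f \) omits \( \infty \), so the argument as written is circular. Nor is it a contradiction that the \( f_{i} \) take values near \( \infty \): they are only required to omit \( \infty \) itself.

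What is needed is a Hurwitz-type winding number argument:
\begin{enumerate}
\item Suppose \( f(z_{0}) = \infty \), and take a closed ball \( \bar B \subset B(0,R) \) about \( z_{0} \) with \( x_{1} \notin \bar B \). Then \( \Sigma = f(\partial B) \) is a topological sphere in \( \R^{d} \).
\item Since \( f \) is an embedding, \( f(B) \) is the complementary component of \( \Sigma \) in \( \compactR^{d} \) containing \( \infty \). Hence \( f(B) \setminus \{\infty\} \) is the unbounded component of \( \R^{d} \setminus \Sigma \).
\item So \( y_{1} = f(x_{1}) \) lies in a bounded component, and by Jordan--Brouwer \( f|_{\partial B} \) has winding number \( \pm 1 \) about \( y_{1} \).
\item On the other hand, each \( f_{i} \) extends over \( \bar B \) with values in \( \R^{d} \setminus \{y_{1}\} \), because \( y_{1} = f_{i}(x_{1}) \notin f_{i}(\bar B) \). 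So \( f_{i}|_{\partial B} \) has winding number \( 0 \) about \( y_{1} \).
\item Since \( \Sigma \) is bounded, \( f_{i} \to f \) uniformly on \( \partial B \) in the Euclidean metric. Once \( \sup_{\partial B} |f_{i} - f| < \operatorname{dist}(y_{1}, \Sigma) \), the straight-line homotopy shows the two winding numbers agree, which is a contradiction.
\end{enumerate}
With this in place the rest of Step 2 goes through. It is also exactly what guarantees that the pole of the M\"obius limit lies outside \( B(0,R) \), as Lemma~\ref{thm-lem Mob inversions close to infinity preserve shape} requires when the result is applied.

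Your restriction to \( d \ge 3 \) is correct and should be kept. As printed, the statement is false for \( d = 2 \): take the constant sequence \( f_{i}(z) = z + \epsilon z^{2} \) with \( 0 < \epsilon < 1/(2R) \), which is a conformal embedding of \( B(0,R) \) but not M\"obius. This is harmless for the paper's application, which only needs \( d \ge 5 \).
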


\section{Piecewise linear mappings}\label{sec: background pl mappings}
\subsection{Triangulations and the PL-category}\label{sec-PL category}

We recall the basic notions of the piecewise linear category.
One may refer to \cites{Hudson1969--PiecewiseLinearTopology, Rourke-Sanderson1972--IntroductionPiecewiseLinearTopology} for further details.


For \( d' \le d \), a \emph{\( d' \)-simplex} in \( \R^{d} \) is a map \( \sigma \colon \{ 0, 1, \ldots, d' \} \to \R^{d} \).
The \emph{vertices} of \( \sigma \) are the values \( \sigma(0), \ldots, \sigma(d') \).
It is convenient to allow repetition of vertices, in which case the simplex is clearly degenerate.
More generally, a simplex is \emph{degenerate} if the set of \emph{base vectors} \( \sigma(0)\sigma(i) \) for \( i > 0 \) comprises a linearly dependent set, and \emph{non-degenerate} otherwise.
When the simplex is top dimensional, \( d' = d \), we say that the simplex is \emph{positively oriented} in case these base vectors are a positively oriented basis of \( \R^{d} \).

For a non-degenerate simplex \( \sigma \), the \emph{image} or \emph{geometric realization} \( |\sigma| \) is the convex hull of its vertices.
It is clear that the faces of \( |\sigma| \) are the images of the subsimplices of \( \sigma \).
A subset \( P \subseteq \R^{d} \) is a \emph{polyhedron} if it admits a locally finite triangulation by the geometric realization of non-degenerate simplices.


Given a polyhedron \( P \subset \R^{d} \), a continuous mapping \( f \colon P \to \R^{m} \) is \emph{piecewise linear}, or \( \pl \), if there exists a triangulation \( T \) of \( P \) such that the restriction to each simplex is affine linear.
Say that \( f \) is a \emph{\( \pl \)-homeomorphism} if it has a \( \pl \) inverse from its image, which holds already if \( f \) is \( \pl \) and a topological homeomorphism.
It may be worth remarking that the triangulation cannot be fixed in advance with respect to the set of all \( \pl \)-maps of \( P \), although arbitrary subdivisions of a given triangulation typically suffice.
For example, given a second \( \pl \) map \( g \), the composition \( g \composed f \) is again \( \pl \), but may only be affine linear on a finer triangulation than \( T \).

A \emph{\( \pl \)-manifold} \( M \) is a (topological) manifold equipped with an atlas such that transitions between charts are \( \pl \)-homeomorphisms.
Every polyhedron \( P \) that is locally \( \pl \)-homeomorphic to \( \R^{d} \) is a \( \pl \)-manifold, and every \( \pl \)-manifold is homeomorphic to such a polyhedron.
This allows the following definition:
a \emph{triangulation} of \( M \) is a triangulation on \( P \) and homeomorphism \( P \to M \).
It will be convenient to use the notation \( T \) for a simplicial complex and \( |T| \) its underlying polyhedron.

A \( \pl \)-homeomorphism between two \( \pl \)-manifolds is a homeomorphism \( f \colon M \to N \) that is \( \pl \) in charts.
This is equivalent to the existence of triangulations of \( M \) and \( N \) on which \( f \) is \( \pl \).

\subsection{Smoothly compatible triangulations}
To relate \( \pl \)-structures and smooth structures, one needs the notion of a \emph{piecewise differentiable map}.
Given a \( \pl \)-manifold \( M \) and a smooth manifold \( N \), a piecewise differentiable map is \( f \colon M \to N \) such that there exists a triangulation \( |T| \to M \) so that the restriction to each simplex of \( |T| \to M \to N \) is smooth.
Such \( f \) is a \emph{piecewise differentiable homeomorphism} if it is a homeomorphism and the restriction to each simplex is a smooth embedding.
In case of a piecewise differentiable homeomorphism \( f \colon |T| \to N \), the triangulation \( T \) is \emph{compatible} with the smooth structure of \( N \).

Whitehead \cite{Whitehead1940--C1Complexes} showed that every differentiable manifold \( N \) admits a compatible \( \pl \)-manifold structure, and the structure is essentially unique.
More precisely, there exists a \( \pl \)-manifold \( M \), a piecewise differentiable homeomorphism \( M \to N \), and for any other compatible \( M' \) there is a \( \pl \)-homeomorphism \( M \to M' \).

If a manifold is smooth, we always assume a given \( \pl \)-structure on it is a compatible Whitehead structure.
In particular, while diffeomorphisms are not themselves \( \pl \), they respect the underlying \( \pl \)-structures:
for \( |T| \to M \) a compatible \( \pl \)-structure, and diffeomorphism \( f \colon M \to N \), the pushforward (composition) \( |T| \to N \) is also compatible.
More generally, a homeomorphism \( f \colon M \to N \) between smooth manifolds can be said to be \( \pl \) if it pushes forward a compatible \( \pl \)-structure of \( M \) to a compatible structure of \( N \).
Equivalently, \( f \) is \( \pl \) if there exists a triangulation of \( M \) along which \( f \) is a piecewise differentiable homeomorphism.


We close the subsection with a characterization of \( \pl \)-homeomorphisms between smooth manifolds.
Recall, for oriented closed manifolds \( M, N \) of equal dimension \( d \), that each continuous map \( f \colon M \to N \) induces a map on cohomology groups, \( f^{*} \colon H^{d}(N) \to H^{d}(M) \).
The map \( f^{*} \) is multiplication by an integer, \( \deg(f) \), called the \emph{degree} of \( f \).

Supposing that \( M, N \) are smooth and \( f \) is piecewise differentiable, the degree can be computed locally.
Say that \( x \in M \) is \emph{regular} where \( \d_{x} f \) is defined and invertible, and define \( \deg(f,x) \) as \( 1 \) or \( -1 \) in case \( \d_{x} f \) is orientation preserving or reversing.
Sard's theorem holds equally well for piecewise differentiable maps as for smooth,
and the degree of \( f \) can be computed at any regular value \( y \in N \) by
\[ \deg(f) = \sum_{x \in f^{-1}(y)} \deg(f,x) . \]
The following is thus clear.
\begin{lemma}\label{thm-lem locally degree 1 map is homeo}
  If \( f \colon M \to N \) is a mapping of degree \( 1 \), piecewise differentiable relative an oriented triangulation \( |T| \to M \), and sends each \( d \)-simplex diffeomorphically with positive orientation, then \( f \) is a \( \pl \)-homeomorphism.
\end{lemma}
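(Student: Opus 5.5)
The plan is to show first that \( f \) is a bijection, then that its inverse is \( \pl \); the bijection comes from a degree count. Fix an oriented triangulation \( |T| \to M \) relative to which \( f \) is piecewise differentiable, and let \( y \in N \) be a regular value that avoids the image of the codimension-one skeleton of \( T \). Such \( y \) exist, and they are dense, because the image of the \( (d-1) \)-skeleton under a piecewise differentiable map is a finite union of images of smooth maps from \( (d-1) \)-dimensional simplices. It therefore has measure zero and is nowhere dense, and Sard's theorem for piecewise differentiable maps, as recalled above, handles the critical values. Every preimage \( x \in f^{-1}(y) \) lies in the interior of some top simplex, where \( f \) is an orientation preserving diffeomorphism, so \( \deg(f,x) = 1 \). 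The degree formula then gives \( \# f^{-1}(y) = \deg(f) = 1 \).

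Next I would promote this to all of \( N \) by a local degree argument. Since \( f \) has degree \( 1 \), it is surjective. To see that each fibre is a single point, I would show that the function \( y \mapsto \# f^{-1}(y) \) is locally constant. At a point \( x \) in the interior of a top simplex, \( f \) is a local diffeomorphism. The harder points lie on lower-dimensional faces, where I would work in the star of the face. The top simplices around the face are sent diffeomorphically, with positive orientation, onto pieces that share the images of their common codimension-one faces. Positive orientation forces adjacent images to lie on opposite sides of the image of their shared facet, so the images fold neither back nor over each other. Going around the link shows that the images of the star cover a neighbourhood of \( f(x) \) some whole number \( m \ge 1 \) of times, so \( f \) is locally a branched covering of degree \( m \) near \( x \). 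Since regular values are dense and each has exactly one preimage, every such local multiplicity is \( 1 \). The fibre over any point also cannot contain two distinct points, since otherwise nearby generic values would acquire two preimages. So \( f \) is a continuous bijection from a compact space to a Hausdorff space, hence a homeomorphism.

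For the \( \pl \) statement I would use the definition recalled earlier: \( f \) is \( \pl \) if some triangulation of \( M \) exists along which \( f \) is a piecewise differentiable homeomorphism. The triangulation \( T \) itself does this job, since each simplex is embedded smoothly and \( f \) is a homeomorphism. Hence \( f \) pushes the compatible structure \( |T| \to M \) forward to a compatible structure \( |T| \to N \), and that structure is \( \pl \)-homeomorphic to the Whitehead structure on \( N \) by uniqueness.

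The main obstacle is the local step at codimension-\( \ge 2 \) faces: showing that positively oriented images of the simplices in a star cannot overlap except with multiplicity, so that the local degree equals the number of preimages. I would first reduce to codimension one, where the opposite-sides argument is clean, and then argue by induction on the dimension of the link using density of generic points. Failing that, I would simply invoke the standard fact that a piecewise differentiable map which is orientation preserving on each simplex has local degree equal to the cardinality of nearby generic fibres.
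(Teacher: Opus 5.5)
Your proposal is correct and follows the paper's route. The paper's proof is just the regular-value count in your first paragraph: each preimage of a regular value contributes \( +1 \) to the degree formula, so such values have exactly one preimage, and the paper then declares the lemma clear. Your local-degree step supplies what the paper leaves implicit, and at faces of codimension \( \ge 2 \) you only need the local degree at each point to be positive, not a branched-covering description. Positivity holds because the local degree counts nearby preimages of generic values, and the image of any top simplex at \( x \) contains such values. A positive local degree already gives local surjectivity, which is all your fibre argument uses.
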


\subsection{Embeddings and compatible metrics}
It is well known \cite{Whitney1936--DifferentiableManifolds} that a smooth manifold \( M \) embeds smoothly in high-dimensional \( \R^{m} \), uniquely up to smooth isotopy.
We will assume any given Riemannian metric on \( M \) is compatible with a particular smooth embedding.
This is no loss of generality, due to the Nash embedding Theorem \cite{Nash1956--ImbeddingProblemRiemannianManifolds}.
For any smooth Riemannian \( M \), we will fix an isometric embedding once and for all;
the specific choice of embedding will not affect our qualitative arguments.

We will also fix a choice of smooth normal bundle, the points \( M_{\epsilon} \) within small \( \epsilon > 0 \) of \( M \).
For small enough \( \epsilon \), the space \( M_{\epsilon} \) is a smooth \( (m-\dim M) \)-ball bundle over \( M \), with normal projection
\[ \nu \colon M_{\epsilon} \to M \]
sending each point to its nearest point in \( M \).

\subsection{Secant approximations}
Given a triangulated manifold \( (M, T) \) and a continuous map \( f \colon M \to \R^{m} \), the \emph{ambient secant approximation of \( f \) along \( T \)} is the unique \( \pl \) map that sends each vertex \( v \) of \( T \) to \( f(v) \) and is then extended linearly across each simplex of \( T \).

Now suppose given continuous \( f \colon M \to N \) to a smooth manifold \( N \).
Fix isometric embedding \( \iota \colon N \to \R^{n} \) and normal bundle \( \nu \colon N_{\epsilon} \to N \).
For any sufficiently fine triangulation \( T \), the ambient secant approximation \( F_{T} \) of \( T \) along \( \iota \composed f \) will be such that the image of \( F_{T} \) is contained in \( N_{\epsilon} \).
In such case, the \emph{secant approximation} \( f_{T} = \nu \composed F_{T} \) is a piecewise differentiable mapping \( M \to N \).

Suppose \( T \) is yet finer, so that for each \( x \in M \), the straight line between \( F_{T}(x) \) and \( \iota \composed f(x) \) is contained in \( N_{\epsilon} \).
In such case, the secant approximation is clearly homotopic to \( f \).
In particular, if \( f \) is an orientation preserving homeomorphism, then \( f_{T} \) has degree 1.

Note that even if \( f \) is a homeomorphism, the secant approximation may not be, regardless of how fine the triangulation \( T \) is taken.
In general, more control over image simplices is needed, such as is provided by bounds on dilatation of \( f \).

On the other hand, secant approximations can be made arbitrarily close to \( f \) by the following standard proposition.
\begin{proposition}
  For continuous \( f \colon M \to N \) and arbitrary \( \epsilon > 0 \), there exists \( \delta > 0 \) small enough that the secant approximation of \( f \) along any \( \delta \)-fine triangulation is \( \epsilon \)-close to \( f \).
\end{proposition}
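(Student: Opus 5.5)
The proof rests on two facts. First, \( \iota \composed f \) is uniformly continuous because \( M \) is compact. Second, the nearest-point projection \( \nu \) moves points of \( N_{\epsilon'} \) by at most \( \epsilon' \). The plan is to show that the secant approximation \( f_{T} = \nu \composed F_{T} \) satisfies, at each \( x \in M \),
\[ d_{N}\bigl(f_{T}(x), f(x)\bigr) \le C\,\bigl|F_{T}(x) - \iota f(x)\bigr| , \]
and that \( |F_{T}(x) - \iota f(x)| \) is small once \( T \) is fine.

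Given \( \epsilon > 0 \), I first fix \( \epsilon' > 0 \) with two properties. It should be smaller than the normal bundle radius. It should also satisfy the following: whenever \( p \in N_{\epsilon'} \) and \( q \in N \) have \( |p - \iota(q)| < \epsilon' \), then \( d_{N}(\nu(p), q) < \epsilon \). Such an \( \epsilon' \) exists because \( |\nu(p) - p| \le |p - \iota q| \), since \( \nu(p) \) is the nearest point of \( N \). This gives \( |\iota\nu(p) - \iota q| < 2\epsilon' \). Because \( \iota \) is a smooth embedding of a compact manifold, the ambient distance and the intrinsic distance \( d_{N} \) are comparable on \( \iota(N) \) at small scales. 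In fact, \( d_{N} \le C|\cdot| \) for some constant \( C \), by compactness and the fact that the identity of \( N \) is a homeomorphism between the two metrics. So it suffices to take \( 2C\epsilon' < \epsilon \).

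Next, uniform continuity of \( \iota \composed f \) gives \( \delta > 0 \) such that \( d_{M}(x,y) < \delta \) implies \( |\iota f(x) - \iota f(y)| < \epsilon' \). Let \( T \) be \( \delta \)-fine, meaning each simplex has diameter less than \( \delta \) in \( M \). Take \( x \) in a simplex \( \sigma \) with vertices \( v_{0}, \ldots, v_{d} \). By construction \( F_{T}(x) \) is a convex combination of the points \( \iota f(v_{i}) \). Each of these lies within \( \epsilon' \) of \( \iota f(x) \), because \( d_{M}(x, v_{i}) < \delta \). The Euclidean ball of radius \( \epsilon' \) about \( \iota f(x) \) is convex, so \( |F_{T}(x) - \iota f(x)| < \epsilon' \). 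In particular \( F_{T}(x) \in N_{\epsilon'} \), so \( f_{T} \) is defined. The choice of \( \epsilon' \) then gives \( d_{N}(f_{T}(x), f(x)) < \epsilon \).

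The only point needing care is the comparison between the chordal distance in \( \R^{n} \) and the intrinsic Riemannian distance on \( N \). I would handle this as above, using compactness. An alternative is simply to measure closeness in the ambient metric. With that settled, the rest is the convexity estimate, and the same \( \delta \) works for every \( \delta \)-fine triangulation.
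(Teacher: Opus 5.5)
Your proof is correct. The paper states this proposition as standard and gives no proof, and your argument is the standard one it alludes to: uniform continuity of \( \iota \composed f \), convexity of the Euclidean \( \epsilon' \)-ball giving \( |F_{T}(x) - \iota f(x)| < \epsilon' \), and the nearest-point property of \( \nu \) giving \( |\iota\nu F_{T}(x) - \iota f(x)| < 2\epsilon' \).

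One small correction concerns your justification of \( d_{N} \le C|\cdot| \). Compactness plus \( \iota \) being a homeomorphism onto its image only gives uniform continuity of \( \iota^{-1} \) from the chordal metric to \( d_{N} \), not a Lipschitz bound. The Lipschitz bound does hold at small scales, but the reason is that \( \iota \) is a smooth isometric embedding. In any case, uniform continuity is all your final step actually uses.
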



Another upshot of the secant approximations is that collections of points in \( M \) of small enough diameter unambiguously determine simplices in \( M \).
By mild abuse of notation, we will say for \( d' \le \dim(M) \) that any function \( \sigma \colon \{0,1,\ldots,d'\} \to M \) is a \emph{\( d' \)-simplex} in \( M \).
Say that \( \sigma \) is \emph{small} if the ambient simplex \( \iota \composed \sigma \) is contained in the normal bundle \( M_{\epsilon} \).
In this case, the geometric realization of \( \sigma \) in \( M \) is the normal projection of \( |\iota \composed \sigma| \).

\section{Almost conformal mappings and shapes}\label{sec: preservation of shapes}

We consider configurations of points in \( \R^{d} \), and the extent to which a quasiconformal map can distort the shape of these configurations.
While the general M\"obius transformation of \( \compactR^{d} \) can distort shape fairly arbitrarily, the situation is better for transformations whose singular point is far from the origin.
By rescaling, it is the same to say that M\"obius transformations fairly well preserve the shape of small configurations far from the singular point.
Using this, and the fact that the set of (not fully degenerate) shapes is compact, it will be shown that maps of small enough dilatation also preserve the shapes of small configurations far from the boundary of the domain.

A particular application is that small dilatation maps don't reverse the orientation of small simplices that are bounded away from being degenerate.

\subsection{Shapes}
The space of configurations of \( r \) points in \( \R^{d} \) (ordered, and repetition allowed) is identified with \( \bigl( \R^{d} \bigr)^{r} \).
The action of orientation preserving similarities on \( \R^{d} \) induces the product action on \( r \)-configurations,
\[ g \cdot (v_{1}, \ldots, v_{r}) = (g \cdot v_{1}, \ldots, g \cdot v_{r}) \quad \mbox{ for } \quad g \in G, \]
\( G \) the group of similarities of \( \R^{d} \).
Within the \( r \)-configurations is the set of fully degenerate configurations,
\[ \mathcal{D}^{r} := \left\{ (v,\ldots,v) \in \bigl(\R^{d}\bigr)^{r} \mid v \in \R^{d} \right\} , \]
which we will exclude from consideration.
It is easy to see that the action of similarities preserves the set of fully degenerate configurations, and so also its complement.

\begin{definition}
  The \emph{space of \( r \)-shapes} \( \shapes^{r} \) is the quotient of $\bigl(\R^{d}\bigr)^{r} - \mathcal{D}^{r}$ by the action of orientation preserving similarities of $\R^{d}$.

  For a point $x \in \shapes^{r}$, a configuration \( p \in \bigl( \R^{d} \bigr)^{r} \) \emph{has shape} $x$ if it is in the equivalence class $x$, i.e. \( x = [p] \).
\end{definition}

An important special case is simplices up to similarity, by taking the vertices as a configuration;
although the shape of this configuration loses the order of vertices, the orientation of a non-degenerate \( (d+1) \)-shape in \( \R^{d} \) is well defined.
We use \( \orientedShapes^{d+1} \) to denote the open subset of \( \shapes^{d+1} \) corresponding to positively oriented \( d \)-simplices.

The following Lemma is easy but crucial.
\begin{lemma}
  For each \( r \), the space of shapes \( \shapes^{r} \) is compact.
\end{lemma}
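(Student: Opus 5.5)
We will exhibit \( \shapes^{r} \) as the continuous image of a compact set, namely a normalized slice of configurations. The quotient map
\[ \pi \colon \bigl(\R^{d}\bigr)^{r} - \mathcal{D}^{r} \to \shapes^{r} \]
is continuous by the definition of the quotient topology. So it suffices to find a compact subset \( C \) of \( \bigl(\R^{d}\bigr)^{r} - \mathcal{D}^{r} \) that meets every orbit of the similarity group \( G \). Then \( \shapes^{r} = \pi(C) \) is compact.

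The slice will be cut out by normalizing translation and scale. Given a configuration \( p = (v_{1}, \ldots, v_{r}) \) not in \( \mathcal{D}^{r} \), let \( c(p) = \frac{1}{r}\sum_{i} v_{i} \) be its barycenter. Translating by \( -c(p) \) produces a configuration with barycenter \( 0 \). This configuration is still not fully degenerate, since translations preserve \( \mathcal{D}^{r} \). A configuration with barycenter \( 0 \) lies in \( \mathcal{D}^{r} \) exactly when it is the zero configuration \( (0,\ldots,0) \). Our translated configuration therefore has positive Euclidean norm in \( \R^{dr} \), and rescaling by the inverse of that norm (a positive scaling, hence an orientation preserving similarity) gives a configuration of norm \( 1 \).

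Define
\[ C = \Bigl\{ (v_{1},\ldots,v_{r}) \in \bigl(\R^{d}\bigr)^{r} \Bigm| \textstyle\sum_{i} v_{i} = 0, \ \sum_{i} |v_{i}|^{2} = 1 \Bigr\} . \]
This is the unit sphere inside a linear subspace of \( \R^{dr} \), so it is closed and bounded, hence compact. It is disjoint from \( \mathcal{D}^{r} \), because the only fully degenerate configuration with barycenter zero is the zero configuration, which has norm \( 0 \). By the previous paragraph, every orbit meets \( C \), so \( \pi|_{C} \) is a continuous surjection onto \( \shapes^{r} \), which proves compactness.

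The only point needing care is ensuring that \( C \) avoids \( \mathcal{D}^{r} \) while still meeting every orbit. Normalizing by the barycenter rather than by \( v_{1} \) makes this immediate. Rotations are not needed for this argument; they can be quotiented afterwards without affecting compactness. Hausdorffness of \( \shapes^{r} \) is not part of the claim. If it were wanted, it would follow because the residual action on \( C \) is by the compact group \( SO(d) \).
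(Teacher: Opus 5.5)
Your proof is correct and takes the same approach as the paper: translate the barycenter to the origin and rescale, giving a compact slice of non-degenerate configurations that surjects onto \( \shapes^{r} \). The only difference is that the paper normalizes to diameter \( 1 \) instead of Euclidean norm \( 1 \) in \( \R^{dr} \), and this does not affect the argument.
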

\begin{proof}
Consider the set \( C \) of not fully degenerate \( r \)-configurations in \( \R^{d} \) with center of mass at the origin and diameter equal to 1.
The set \( C \) is closed and bounded in \( \bigl(\R^{d}\bigr)^{r} \), so is compact.
Any configuration not in \( \mathcal{D}^{r} \) is equivalent to an element of \( C \) by a similarity, so \( C \) surjects onto \( \shapes^{r} \), which is thus compact.
\end{proof}

We make the following notational convention.
Suppose given an open embedding \( f \colon U \to \R^{d} \) and a subset of shapes \( S \subset \shapes^{r} \).
The image of shapes by \( f \) is the set
\[ f[S] := \left\{ [f(p)] \in \shapes^{r} \colon p \in U^{r} \mbox{ such that } [p] \in S \right\} . \]
Given compact \( C \subset U \),
the image of shapes in \( C \) by \( f \) is the set
\[ f_{C}[S] := \left\{ [f(p)] \in \shapes^{r} \colon p \in C^{r} \mbox{ such that } [p] \in S \right\} . \]
For \( \delta > 0 \), the image of \( \delta \)-small shapes in \( C \) by \( f \) is
\[ f_{C, \delta}[S] := \left\{ [f(p)] \in \shapes^{r} \colon p \in C^{r} \mbox{ such that } [p] \in S \mbox{ and } \diam(p) < \delta \right\} . \]

\subsection{M\"obius maps preserve shape of small configurations}
We first demonstrate that any M\"obius transformation preserves the shape of configurations far from its singular point.
\begin{lemma}\label{thm-lem Mob inversions close to infinity preserve shape}
  Fix two disjoint closed sets of \( r \)-shapes \( S, S' \subset \shapes^{r} \) in \( \R^{d} \).
  \begin{enumerate}
  \item\label{part: large radial Mobius preservation}
    There exists a radius \( R = R_{S,S'} \) such that every orientation preserving M\"obius transformation \( \mu \colon B(0,R) \to \R^{d} \) satisfies
    \[ (\mu_{B(0,1)}[S]) \cap S' = \varnothing . \]
    In other words, \( \mu \) does not send any configuration contained in \( B(0,1) \) and with shape in $S$ to a configuration with shape in \( S' \).

  \item
    For any fixed open \( U \subseteq \R^{d} \) and compact \( C \subset U \), there exists a diameter \( \delta > 0 \) such that every orientation preserving M\"obius transformation \( \mu \colon U \to \R^{d} \) satisfies
    \[ (\mu_{C,\delta}[S]) \cap S' = \varnothing . \]
    In other words, \( \mu \) does not send any configuration in \( C \) of diameter less than \( \delta \) and shape in $S$ to a configuration with shape in \( S' \).
  \end{enumerate}
\end{lemma}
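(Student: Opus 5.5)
Part (1) is proved by contradiction and compactness, and part (2) follows from it by rescaling.

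\emph{Part (1).} Suppose no such \( R \) exists. Then there are radii \( R_{i} \to \infty \), orientation preserving M\"obius maps \( \mu_{i} \) defined on \( B(0,R_{i}) \) (so their singular points \( \mu_{i}^{-1}(\infty) \) lie outside \( B(0,R_{i}) \), or \( \mu_{i} \) is a similarity), and configurations \( p_{i} \in B(0,1)^{r} \) with \( [p_{i}] \in S \) and \( [\mu_{i}(p_{i})] \in S' \).

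First normalize \( \mu_{i} \). Post-composing with a similarity \( g_{i} \) changes neither the image shape nor the domain, so we may assume \( g_{i}\mu_{i}(0) = 0 \) and \( g_{i}\mu_{i}(e_{1}) = e_{1} \). Each normalized map is a similarity or has the form \( x \mapsto A\,\iota_{a_{i}}(x) \) plus an affine correction, where \( \iota_{a_{i}} \) is inversion in a sphere centred at \( a_{i} \) with \( |a_{i}| \ge R_{i} \). The differential of inversion centred at \( a \) at the point \( x \) is \( |x-a|^{-2} \) times a reflection. Over \( B(0,1) \) this differential varies by a relative factor \( 1+O(1/|a_{i}|) \), so after normalization the maps converge uniformly on \( \overline{B(0,1)} \) to an orientation preserving similarity fixing \( 0 \) and \( e_{1} \), namely the identity.

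If one prefers to avoid the explicit formula, the same conclusion should follow from Theorem \ref{thm-convergence to a Mobius} applied on \( B(0,2) \): the normalized maps subconverge locally uniformly to a M\"obius map \( \mu_{\infty} \) on \( B(0,2) \). The point to check is that \( \mu_{\infty} \) is defined on all of \( \R^{d} \), hence is a similarity, because the singular points escape to infinity. I would argue this by restricting to the balls \( B(0,R) \) for every fixed \( R \) and using a diagonal subsequence.

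The uniform convergence is not enough by itself, because the configurations \( p_{i} \) may shrink. So normalize the configurations as well. Pass to a subsequence with \( p_{i} \to p \) in \( \overline{B(0,1)}^{r} \). The diameter of \( p_{i} \) may tend to \( 0 \), and then \( p \) is fully degenerate. To handle this, conjugate \( p_{i} \) and \( \mu_{i} \) by the similarity \( h_{i} \) that moves the centre of mass of \( p_{i} \) to \( 0 \) and rescales \( p_{i} \) to diameter \( 1 \). The rescaling factor is at least \( 1/2 \), since \( \diam p_{i} \le 2 \). The conjugated maps are then M\"obius maps defined on \( B(0,R_{i}/2-2) \), whose radius still tends to infinity. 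The rescaled configurations \( q_{i} \) lie in a fixed compact set of non-degenerate configurations: diameter \( 1 \), centre of mass \( 0 \).

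Now apply the uniform convergence above, normalizing the conjugated maps by similarities. After passing to a subsequence, \( \mu_{i}(q_{i}) \) becomes similar to configurations converging to \( q = \lim q_{i} \). The shape map is continuous on non-degenerate configurations and \( S, S' \) are closed, so \( [q] \in S \cap S' \). This contradicts disjointness.

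\emph{Part (2).} Since \( C \) is compact in \( U \), let \( \rho = \operatorname{dist}(C, \partial U) > 0 \), or \( \rho = 1 \) if \( U = \R^{d} \). Set \( \delta = \rho / R_{S,S'} \). Take a configuration \( p \) in \( C \) with \( \diam p < \delta \), and centre at a point \( x \in p \). The ball \( B(x, \delta R) \) is contained in \( U \), and \( p \) lies in \( B(x,\delta) \). Let \( h(y) = x + \delta y \). Then \( \mu \circ h \) is an orientation preserving M\"obius map defined on \( B(0,R) \), and \( h^{-1}(p) \) lies in \( B(0,1) \) and has the same shape as \( p \). Part (1) now gives the claim.

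The main obstacle is the step in part (1) showing that M\"obius maps whose singular point escapes to infinity converge, after normalization, to similarities uniformly on balls, and that this survives the rescaling of shrinking configurations. The key point is that conjugating by expansions only pushes the singular point further away.
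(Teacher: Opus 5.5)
Your argument is correct, and its skeleton matches the paper's. You argue by contradiction and renormalize the configurations by similarities, noting that expansions only push the pole farther out, so the radii of definition still tend to infinity. You then extract a limiting orientation preserving similarity and conclude that some shape lies in $S\cap S'$. Part (2) is the same rescaling argument as in the paper.

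The one genuinely different step is how you obtain the limiting similarity. The paper fixes two vertices of the source and target configurations. It then uses compactness of M\"obius maps that keep three points separated: $\sigma(0)$, $\sigma(1)$, and the pole, which escapes to $\infty$. The subsequential limit is then a M\"obius map fixing $\infty$ that sends $\sigma$ to $\sigma'$. You instead use the explicit differential of an inversion to show that a M\"obius map whose pole lies at distance at least $R$ is $O(1/R)$-close to a similarity on $\overline{B(0,1)}$. You normalize the maps at $0$ and $e_1$ rather than at vertices, and finish with continuity of the shape map on configurations that are not fully degenerate. Your route is more elementary and can be made quantitative. The paper's compactness route is the one that carries over verbatim to Lemma \ref{thm-lem small dilatation preserves shapes}, where no formula is available. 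Your fallback via Theorem \ref{thm-convergence to a Mobius} with a diagonal subsequence is essentially the paper's argument.

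One correction. When $d\ge 3$, fixing $0$ and $e_1$ does not determine an orientation preserving similarity. So the normalized maps need not converge to the identity, nor converge at all without passing to a subsequence. Write $\hat\mu_i$ for the normalized conjugated maps. Your derivative estimate actually gives $\hat\mu_i(x)=\kappa_i O_i x+O(1/R_i)$ uniformly on $\overline{B(0,1)}$, with $O_i\in SO(d)$, $\kappa_i\to 1$ and $O_i e_1\to e_1$. Hence a subsequence converges to a rotation $g_\infty$ about the $e_1$-axis. This is harmless for the proof, since $\hat\mu_i(q_i)\to g_\infty(q)$ and $[g_\infty(q)]=[q]$, but the phrase ``namely the identity'' should be replaced accordingly.
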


\begin{proof}
  To prove \eqref{part: large radial Mobius preservation}, suppose that no bound on radius held, so that there existed a sequence \( (\mu_{i}, R_{i}, \sigma_{i}, \sigma'_{i}) \) such that
  \begin{itemize}
  \item
    each \( \mu_{i} \) is a M\"obius mapping \( \mu_{i} \colon B(0,R_{i}) \to \R^{d} \),

  \item
    the radii \( R_{i} \) converge to infinity,

  \item
    each \( \sigma_{i} \) is a configuration contained in \( B(0,1) \) and of shape in \( S \),

  \item
    each \( \sigma_{i}' \) is a configuration of shape in \( S' \), and

  \item
    each \( \mu_{i} \) sends \( \sigma_{i} \) to \( \sigma'_{i} \).
  \end{itemize}
  Given compactness of \( S \), we may take a subsequence to guarantee that the shapes \( [\sigma_{i}] \) converge to a fixed shape \( [\sigma] \in S \) and the \( [\sigma'_{i}] \) converge to a fixed \( [\sigma'] \in S' \).

  Fix any representative \( \sigma \subset B(0,1) \) for \( [\sigma] \), which configuration will have finite positive diameter.
  By precomposing each \( \mu_{i} \) with a similarity, we may assume that \( \sigma_{i} \to \sigma \) as configurations;
  care is needed here that in doing so the radii \( R_{i} \) may decrease, but the decrease is at worst scaling by a factor dependent on the diameter of \( \sigma \), so the \( R_{i} \) still converge to infinity.
  Next, by post-composing with similarities, we arrange that \( \sigma'_{i} \to \sigma' \) for some fixed representative \( \sigma' \) of \( [\sigma'] \).

  We have without loss of generality a sequence \( (\mu_{i}, R_{i}, \sigma_{i}, \sigma'_{i}) \) as above for which furthermore \( \sigma_{i} \to \sigma \) and \( \sigma'_{i} \to \sigma' \).
  Because \( \sigma \) is not fully degenerate, it has two distinct vertices (say \( \sigma(0) \) and \( \sigma(1) \)).
  By composing with similarities near the identity, we may suppose each \( \sigma_{i} \) to have the same vertices, \( \sigma_{i}(0) = \sigma(0) \) and \( \sigma_{i}(1) = \sigma(1) \), and that each \( \sigma'_{i} \) has the same (distinct) first two vertices of \( \sigma' \).
  The sequence of M\"obius maps \( \mu_{i} \) separates the three points \( \sigma(0), \sigma(1), \infty \) in \( \compactR^{d} \), and so there exists a subsequence converging to a M\"obius mapping \( \mu \).
  The limiting \( \mu \) fixes the point at \( \infty \), so must be a similarity, but also maps \( \sigma \) to \( \sigma' \), a contradiction.

  The statement (2) follows from (1).
  Let \( \delta_{0} > 0 \) be the distance from \( C \) to the complement \( U^{c} \subset \R^{d} \) and \( R = R_{S,S'} \) the radius guaranteed by statement \eqref{part: large radial Mobius preservation}.
  Then any positive diameter \( \delta < \delta_{0} / R \) will suffice.
  Given M\"obius \( \mu \colon U \to \R^{d} \) and configuration \( \sigma \subset C \) of diameter less than \( \delta \), we may suppose after a translation that \( \sigma \subset B(0,\delta) \) and that \( \mu \) is defined on \( B(0,\delta_{0}) \).
  But, by precomposing \( \mu \) with scaling by \( R / \delta_{0} \), the hypothesis of \eqref{part: large radial Mobius preservation} can be made to hold, and thus if \( [\sigma] \in S \) it cannot hold that \( [\mu(\sigma)] \in S'  \).
\end{proof}

\subsection{Nearly conformal maps preserve the shape of small configurations}
The conclusion of Lemma \ref{thm-lem Mob inversions close to infinity preserve shape} can be extended to maps of sufficiently small dilatation.

\begin{lemma}\label{thm-lem small dilatation preserves shapes}
  Fix two disjoint closed sets of \( r \)-shapes \( S, S' \subset \shapes^{r} \) in \( \R^{d} \).

  \begin{enumerate}
  \item\label{part: large radial small QC preservation}
    For any radius \( R > R_{S,S'} \), there is a dilatation $K > 1$ such that any orientation preserving \( K \)-quasiconformal mapping $f \colon B(0,R) \to \R^{d}$ satisfies
    \[ (f_{B(0,1)}[S]) \cap S' = \varnothing . \]

  \item
    Fix \( K \) as in \eqref{part: large radial small QC preservation}.
    For every open \( U \subseteq \R^{d} \) and compact \( C \subset U \), there exists a diameter \( \delta > 0 \) such that every \( K \)-quasiconformal \( f \colon U \to \R^{d} \) satisfies
    \[ (f_{C,\delta}[S]) \cap S' = \varnothing . \]
  \end{enumerate}
\end{lemma}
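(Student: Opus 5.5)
We prove part (1) by contradiction, reducing to Lemma \ref{thm-lem Mob inversions close to infinity preserve shape}(1) via the compactness result Theorem \ref{thm-convergence to a Mobius}.

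Suppose no \( K \) works for a given \( R > R_{S,S'} \). Then there are orientation preserving quasiconformal \( f_{i} \colon B(0,R) \to \R^{d} \) with \( K(f_{i}) \to 1 \), and configurations \( \sigma_{i} \subset B(0,1) \) with \( [\sigma_{i}] \in S \) and \( [f_{i}(\sigma_{i})] \in S' \). Pass to a subsequence so that \( \sigma_{i} \to \sigma \) in \( \overline{B(0,1)}^{r} \). The main subtlety is that the \( \sigma_{i} \) may shrink to a point. This is handled by rescaling: if \( \sigma_{i} \) has diameter \( \rho_{i} \) and contains the point \( c_{i} \), precompose \( f_{i} \) with the similarity \( z \mapsto c_{i} + \rho_{i} z \), and postcompose with a similarity. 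Since similarities do not change dilatation or shape, this normalizes the picture. Rescaling also enlarges the domain of definition to a ball of radius roughly \( (R-1)/\rho_{i} \) about the new configuration. To keep the argument uniform, I would instead use a radius \( R' \) with \( R_{S,S'} < R' \) such that the domain always contains a ball of radius \( R' \cdot \diam \) around the configuration, centered suitably. The slack \( R > R_{S,S'} \), together with the fact that the condition in Lemma \ref{thm-lem Mob inversions close to infinity preserve shape}(1) is scale invariant, should make this bookkeeping work.

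With this normalization we may assume \( \sigma_{i} \to \sigma \) with \( \sigma \) not fully degenerate, lying in \( \overline{B(0,1)} \) (or a slightly smaller ball after using the slack in \( R \)). We may also assume two distinct vertices \( \sigma_{i}(0) = \sigma(0) \) and \( \sigma_{i}(1) = \sigma(1) \) are fixed, by composing with similarities tending to the identity. We then postcompose \( f_{i} \) by similarities so that \( f_{i}(\sigma(0)) \) and \( f_{i}(\sigma(1)) \) are fixed points \( y_{0} \neq y_{1} \). Theorem \ref{thm-convergence to a Mobius} now yields a subsequence converging locally uniformly on the ball to a M\"obius transformation \( \mu \), which is orientation preserving as a limit of orientation preserving maps.

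Locally uniform convergence together with \( \sigma_{i} \to \sigma \) gives \( f_{i}(\sigma_{i}) \to \mu(\sigma) \). Here \( \mu(\sigma) \) is not fully degenerate because \( \mu \) is injective. Since \( S' \) is closed and shapes vary continuously on non-degenerate configurations, we get \( [\mu(\sigma)] \in S' \). Likewise \( [\sigma] \in S \), because \( S \) is closed. This contradicts Lemma \ref{thm-lem Mob inversions close to infinity preserve shape}(1) applied on \( B(0,R') \) with \( R' > R_{S,S'} \). The step I expect to be delicate is ensuring that the limit configuration lies compactly inside the ball where convergence holds and that the Möbius lemma applies with a radius strictly exceeding \( R_{S,S'} \). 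This is exactly why the hypothesis \( R > R_{S,S'} \) is strict: shrinking the target ball slightly keeps \( \sigma \) in a compact subset.

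Part (2) follows from (1) by the same scaling argument used for Lemma \ref{thm-lem Mob inversions close to infinity preserve shape}(2). Let \( \delta_{0} = d(C, U^{c}) \) and take \( \delta < \delta_{0}/R \). Given \( \sigma \subset C \) of diameter less than \( \delta \), translate so that \( \sigma \subset B(0,\delta) \) with \( f \) defined on \( B(0,\delta_{0}) \). Then precompose with the scaling by \( \delta_{0}/R \), which maps \( B(0,R) \) into \( B(0,\delta_{0}) \) and \( B(0,1) \) onto a ball containing \( \sigma \). The resulting map is \( K \)-quasiconformal on \( B(0,R) \), so part (1) forbids \( [f(\sigma)] \in S' \).
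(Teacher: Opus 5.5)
Your proposal is essentially the paper's proof: contradiction with \( K(f_{i}) \to 1 \), normalization by similarities, a M\"obius limit from Theorem \ref{thm-convergence to a Mobius}, a contradiction with Lemma \ref{thm-lem Mob inversions close to infinity preserve shape}(1) using the slack \( R > R_{S,S'} \), and part (2) by the same rescaling. The one step to fix is your uniform normalization (the paper is equally terse here): a ball of radius \( R' \diam(\sigma_{i}) \) about the configuration need not fit in \( B(0,R) \) when \( \diam(\sigma_{i}) \) is near \( 2 \), so split into cases instead, not rescaling at all when the diameters stay bounded below (the limit \( \sigma \subset \bar{B}(0,1) \) is then already non-degenerate and compactly inside \( B(0,R) \)), and rescaling about a vertex only when \( \diam(\sigma_{i}) \to 0 \), in which case the domain radius \( (R-1)/\diam(\sigma_{i}) \) tends to infinity.
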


\begin{proof}
  To prove the first statement, suppose for contradiction a sequence \( (f_{i}, \sigma_{i}, \sigma_{i}') \), where
  \begin{itemize}
    \item each \( f_{i} \colon B(0,R) \to \R^{d} \) is a quasiconformal mapping,

    \item the dilatations of the \( f_{i} \) converge to \( 1 \),

    \item each \( \sigma_{i} \) is a configuration contained in \( B(0,1) \) and of shape in \( S \),

    \item each \( \sigma_{i}' \) is a configuration of shape in \( S' \), and

    \item each \( f_{i} \) sends \( \sigma_{i} \) to \( \sigma_{i}' \).
  \end{itemize}
  By the same argument as for Lemma \ref{thm-lem Mob inversions close to infinity preserve shape}, we may assume convergence of configurations \( \sigma_{i} \to \sigma \) and \( \sigma'_{i} \to \sigma' \) of shape in \( S \) and \( S' \) respectively.
  The one new point is that the precomposition of each \( f_{i} \) by a similarity should be chosen so that the radius of definition does not decrease below \( R_{S,S'} \).
  This can be accomplished by choosing the representative configuration \( \sigma \) of sufficiently large diameter but still in \( B(0,1) \).
  In doing so, it may be assumed that there is \( R' > R_{S,S'} \) for which each \( f_{i} \) is defined on \( B(0,R') \).

  We have arranged matters so that Theorem \ref{thm-convergence to a Mobius} applies, and some subsequence \( f_{i} \) converges to a M\"obius transformation that maps \( B(0,R') \) to \( \R^{d} \) and sends \( \sigma \) to \( \sigma' \), contradicting Lemma \ref{thm-lem Mob inversions close to infinity preserve shape}.

  The second statement follows from the first by precisely the same argument as in \ref{thm-lem Mob inversions close to infinity preserve shape}.
\end{proof}

\subsection{Numerics}
The argument given is by compactness, so provides no quantitative information on what dilatation is required to satisfy the conclusion.
It would of course be interesting to have more quantitative statements, in particular on the dilatation required to preserve orientation of simplices.
Quantitative bounds are not difficult to obtain in the Lipschitz setting, by direct geometric argument---how much stretch does one need to invert a simplex?---and these arguments could be combined with the well-developed Mori-distortion type theorems on quasiconformal mappings.
We don't pursue this further here; only the existence of critical dilatations is needed for the application at hand.

\subsection{The distortion of shapes}
The lemma can be interpreted as follows: take a closed subset of shapes \( S \subset \shapes^{r} \), and an open set \( V \) containing \( S \).
Since \( S \) and the complement of \( V \) are disjoint and closed, the lemma applies, and we conclude that maps of small dilatation and large radius send configurations with shape in \( S \) to configurations of shape close to \( S \).

\subsection{Control on simplex-wise distortion}
We conclude this section with a Lemma demonstrating that the distortion of small simplices is controlled for maps of small enough dilatation.
This Lemma is used to demonstrate that the dilatation of \( \pl \)-approximations can be controlled.

For two non-degenerate \( d \)-simplices \( \sigma, \sigma' \) in \( \R^{d} \), let \( A_{\sigma',\sigma} \) be the unique affine map sending \( \sigma \) to \( \sigma' \).
Define the \emph{affine ellipticity} of any simplex \( \sigma \) relative a fixed positive simplex \( \sigma' \) by the continuous function
\[ \begin{tikzcd}[row sep={7mm,between origins}, column sep={4em}]
    E'_{\sigma'}: \bigl(\R^{d}\bigr)^{d+1} \backslash \mathcal{D}^{d+1} \ar[r] & {[1, \infty]} & \\
    \sigma \ar[r, mapsto] & K(A_{\sigma,\sigma'}) & \mbox{if } [\sigma] \in \orientedShapes^{d+1} \\
    \sigma \ar[r, mapsto] & \infty & \mbox{if } [\sigma] \in \shapes^{d+1} - \orientedShapes^{d+1} .
\end{tikzcd} \]
This function is invariant under the action of similarities, so factors through a continuous function
\[ E_{\sigma'} \colon \shapes^{d+1} \xrightarrow{\qquad} [1,\infty] . \]
Moreover, if \( \sigma'' \) and \( \sigma' \) have the same shape, then \( E_{\sigma''} = E_{\sigma'} \), so there is no ambiguity in defining \( E_{[\sigma']} = E_{\sigma'} \), where \( \sigma' \) is any representative of the shape \( [\sigma'] \).

\begin{lemma}\label{thm: lem control on affine distortion}
  Fix a compact \( S \subset \orientedShapes^{d+1} \) and a dilatation \( K' > 1 \).
  There exists a dilatation \( K \) so that every compact-open pair \( C \subset U \subseteq \R^{d} \) admits a radius \( \delta > 0 \) small enough that for every \( K \)-quasiconformal map \( f \colon U \to \R^{d} \),
  and for every \( \delta \)-small \( d \)-simplex \( \sigma \) contained in \( C \) and of shape in \( S \), the simplex \( f(\sigma) \) is positively oriented and \( K(A_{f(\sigma),\sigma}) < K' \).
\end{lemma}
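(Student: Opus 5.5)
The plan is to reduce to Lemma \ref{thm-lem small dilatation preserves shapes}, choosing a suitable pair of disjoint closed sets of \( (d+1) \)-shapes. The natural choice is to let \( S \) be the given compact set, and to let \( S' \) be the set of shapes that are "bad relative to \( S \)". Concretely, I would define
\[ S' := \bigl\{ [\tau] \in \shapes^{d+1} \colon E_{[\sigma]}([\tau]) \ge K' \mbox{ for some } [\sigma] \in S \bigr\} . \]
Since \( E_{[\sigma]}([\tau]) = K(A_{\tau,\sigma}) \), and this takes the value \( \infty \) when \( [\tau] \) is not positively oriented, \( S' \) contains all degenerate and negatively oriented shapes. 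A shape avoiding \( S' \) is therefore positively oriented and satisfies \( K(A_{\tau,\sigma}) < K' \), which is exactly the desired conclusion.

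Two facts then need checking.

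First, \( S \) and \( S' \) are disjoint. This is clear, because \( E_{[\sigma]}([\sigma]) = 1 < K' \).

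Second, \( S' \) is closed. I would argue this through the joint continuity of \( (\,[\sigma],[\tau]\,) \mapsto E_{[\sigma]}([\tau]) \) on \( \orientedShapes^{d+1} \times \shapes^{d+1} \) into \( [1,\infty] \). The paper only asserts continuity in \( \tau \) for fixed \( \sigma \), so I would supply the joint version. It holds because \( A_{\tau,\sigma} \) depends continuously on both sets of vertices while \( \sigma \) stays non-degenerate, and because \( K \) of a linear map is the ratio of its extreme singular values, which tends to \( \infty \) as \( \tau \) degenerates or crosses orientation.

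With joint continuity in hand, closedness follows by compactness. Take \( [\tau_i] \to [\tau] \) with witnesses \( [\sigma_i] \in S \). Pass to a subsequence with \( [\sigma_i] \to [\sigma] \in S \), using that \( S \) is compact. Then \( E_{[\sigma]}([\tau]) = \lim E_{[\sigma_i]}([\tau_i]) \ge K' \). In effect, \( S' \) is the projection of a closed set along the compact factor \( S \).

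I expect this closedness/joint-continuity step to be the main obstacle. The subtle part is continuity into \( [1,\infty] \) near degenerate shapes, and there is one further detail to confirm: that representatives of shapes in a compact set can be normalized, with center of mass at the origin and diameter \( 1 \), uniformly. If the joint continuity proves awkward, a fallback is to define \( S' \) as the complement of the open set \( \{[\tau] \colon \max_{[\sigma] \in S} E_{[\sigma]}([\tau]) < K'\} \) and argue openness directly.

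Finally, apply Lemma \ref{thm-lem small dilatation preserves shapes} to the pair \( S, S' \). Part \eqref{part: large radial small QC preservation} yields a dilatation \( K \), which depends only on \( S \) and \( K' \). Part (2) then gives, for each compact-open pair \( C \subset U \), a \( \delta > 0 \) such that every \( K \)-quasiconformal \( f \colon U \to \R^{d} \) satisfies \( f_{C,\delta}[S] \cap S' = \varnothing \). For a \( \delta \)-small simplex \( \sigma \subset C \) with \( [\sigma] \in S \), this means \( [f(\sigma)] \notin S' \). In particular \( E_{[\sigma]}([f(\sigma)]) < K' \), so \( f(\sigma) \) is positively oriented and \( K(A_{f(\sigma),\sigma}) < K' \).

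Orientation preservation of \( f \) is assumed implicitly here, as in Lemma \ref{thm-lem small dilatation preserves shapes}. If it is not assumed, I would restrict to orientation preserving maps or note that \( U \) may be taken connected.
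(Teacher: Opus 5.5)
\textbf{The gap is in the disjointness step.} Your set \( S' \) collects every shape that is bad relative to \emph{some} element of \( S \). So \( S \cap S' = \varnothing \) needs \( E_{x}(y) < K' \) for \emph{all pairs} \( x, y \in S \), not just \( E_{x}(x) = 1 \) for each \( x \). The identity \( E_{[\sigma]}([\sigma]) = 1 \) only shows that \( [\sigma] \) does not witness itself; another \( x \in S \) can witness it.

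This failure is generic. \( E_{x}(y) = 1 \) forces \( A_{y,x} \) to be a similarity, i.e. \( y = x \). Hence once \( S \) contains two distinct shapes \( x, y \), any \( K' \le E_{x}(y) \) puts \( y \in S \cap S' \). That covers \( K' \) near \( 1 \), which is exactly the regime in which the lemma is used, and sets like \( \Theta_{\R^{d}}^{-1}([\Theta,\infty)) \).

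Lemma \ref{thm-lem small dilatation preserves shapes} then cannot be applied. With overlapping \( S, S' \) its conclusion already fails for \( f = \mathrm{id} \). The obstruction is structural: that lemma only forbids the image shape from landing in a fixed set. It does not remember the source shape, so no single global \( S' \) can encode ``bad relative to the source.''

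\textbf{The fix is to localize first, which is what the paper does.} Your argument, including the joint continuity and closedness (these are fine), goes through verbatim when \( S \) has small \( E \)-diameter, \( \sup_{x,y\in S} E_{x}(y) < K' \).

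For general compact \( S \), use \( K(A^{-1}) = K(A) \) and submultiplicativity to get \( E_{x}(y) \le E_{z}(x)\,E_{z}(y) \). Hence each compact piece \( S \cap E_{z}^{-1}([1,K'^{1/3}]) \) has small \( E \)-diameter. Finitely many of the open sets \( E_{z}^{-1}([1,K'^{1/3})) \), \( z \in S \), cover \( S \). Apply your argument to each corresponding piece. Then take the minimum of the finitely many dilatations, and, for each pair \( C \subset U \), the minimum of the finitely many \( \delta \).

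The paper streamlines this slightly. On a piece where \( E_{x} \le \sqrt{K'} \), it takes \( S' \) to be the complement of \( E_{x}^{-1}([1,\sqrt{K'})) \). This uses a single reference shape, so no joint continuity is needed. It then recovers the bound relative to the actual source simplex \( \sigma' \) via \( K(A_{\sigma'',\sigma'}) \le K(A_{\sigma'',\sigma})\,K(A_{\sigma,\sigma'}) < K' \).
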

\begin{proof}
  Suppose first that \( S \) is small enough that for some \( x = [\sigma] \in S \),
  the restriction of \( E_{x} \) to \( S \) is bounded above by \( \sqrt{K'} \).
  Let \( V = E_{x}^{-1}\bigl([1, \sqrt{K'})\bigr) \).
  Then \( V \) is open, contains \( S \), and has closure \( E_{x}^{-1}\bigl([1, \sqrt{K'}] \bigr) \), which is contained in \( \orientedShapes^{d+1} \).
  By application of Lemma \ref{thm-lem small dilatation preserves shapes}, there is a dilatation \( K > 1 \) and then for each compact-open pair \( C \subset U \) a diameter \( \delta > 0 \) so that every \( \delta \)-small simplex \( \sigma' \) in \( C \) of shape in \( S \) is sent by every \( K \)-quasiconformal \( f \colon U \to \R^{d} \) to a simplex \( \sigma'' \) of shape in \( V \).
  For every such \( \sigma' \), the dilatation bound holds,
  \[ K(A_{\sigma'', \sigma'}) \le K(A_{\sigma'',\sigma}) K(A_{\sigma,\sigma'}) < \sqrt{K'}\sqrt{K'} = K' . \]

  To prove the claim for arbitrary compact \( S \subset \orientedShapes^{d+1} \), cover \( S \) with the open sets \( S_{x} = E_{x}^{-1}\bigl([1, \sqrt{K'})\bigr) \) as \( x \) ranges over \( S \).
  There is a finite subcover and sufficient dilatations \( K_{i} \) for each open in the subcover, so take the minimum amongst the \( K_{i} \).
\end{proof}

\section{Full triangulations}\label{sec: full triangulations}
We apply the results of the previous section to triangulations of Riemannian manifolds.
\subsection{Fullness of Euclidean simplices}

The \emph{fullness} (cf. \cite{Whitney1957--GeometricIntegrationTheory}*{section IV.B}) of a non-degenerate \( d' \)-simplex \( \sigma \) in \( \R^{d} \), is given in terms of its convex hull \( |\sigma| \), by
\[ \Theta_{\R^{d}}(\sigma) = \frac{\vol(|\sigma|)}{(\diam(|\sigma|))^{d'}} . \]
In the case of a top dimensional \( d \)-simplex in \( \R^{d} \), this definition can be extended to include orientation,
\[ \Theta_{\R^{d}}(\sigma) = \sign(\sigma)\frac{\vol(|\sigma|)}{(\diam(|\sigma|))^{d}} , \]
with \( \sign(\sigma) = \pm 1 \) dependent on whether \( \sigma \) is positively or negatively oriented.

The function \( \Theta_{\R^{d}} \) is clearly similarity invariant, and so descends to a function on simplex shapes \( \shapes^{d+1} \).
The key point is that fullness is well controlled by maps of small dilatation.

\begin{lemma}\label{thm: lem small dilatation preserves fullness}
  Fix dimension \( d \), fullness \( \Theta > 0 \), and a constant \( \alpha \in (0,1) \).
  There exists a dilatation \( K > 1 \) such that for every open \( U \subseteq \R^{d} \) and compact \( C \subset U \), there exists a diameter \( \delta > 0 \) so that every \( K \)-quasiconformal \( f \colon U \to \R^{d} \) sends \( \delta \)-small simplices in \( C \) of fullness more than \( \Theta \) to simplices of fullness more than \( \alpha \Theta \).
\end{lemma}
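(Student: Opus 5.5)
This follows from Lemma \ref{thm-lem small dilatation preserves shapes}, applied with \( r = d+1 \) and the oriented fullness \( \Theta_{\R^{d}} \), which is similarity invariant and so descends to a function on \( \shapes^{d+1} \). First I check that this function is continuous on the whole of \( \shapes^{d+1} \), including degenerate but not fully degenerate shapes, where it should be taken to be \( 0 \). Represent shapes by configurations of diameter \( 1 \) and center of mass \( 0 \), as in the compactness proof. On these, the diameter of the convex hull is identically \( 1 \). The signed volume is the determinant of the base vectors divided by \( d! \), a polynomial in the vertices, and it vanishes exactly on degenerate simplices. So the oriented fullness is a continuous function \( \Theta_{\R^{d}} \colon \shapes^{d+1} \to \R \) extending the definition by \( 0 \) on degenerate shapes. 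The orientation convention is what makes this work: a fullness bounded below by a positive constant forces positive orientation.

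Next I set up the two sets of shapes. The statement says ``fullness more than \( \Theta \)'', an open condition, so I take \( S = \Theta_{\R^{d}}^{-1}([\Theta, \infty)) \) and \( S' = \Theta_{\R^{d}}^{-1}((-\infty, \alpha\Theta]) \). Both are closed subsets of \( \shapes^{d+1} \). They are disjoint because \( \alpha\Theta < \Theta \). Every simplex of fullness more than \( \Theta \) has shape in \( S \), and any image simplex that fails to have fullness more than \( \alpha\Theta \) has shape in \( S' \). This includes images that are fully degenerate: that cannot happen for a homeomorphism, since it would require distinct vertices to collapse to one point. For unoriented fullness the same argument works, using absolute values.

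Then I invoke Lemma \ref{thm-lem small dilatation preserves shapes}. Part (1), with any fixed \( R > R_{S,S'} \), produces \( K > 1 \). Part (2) then gives, for each compact-open pair \( C \subset U \), a diameter \( \delta > 0 \) such that \( f_{C,\delta}[S] \cap S' = \varnothing \) for every \( K \)-quasiconformal \( f \colon U \to \R^{d} \). A \( \delta \)-small simplex \( \sigma \) in \( C \) of fullness more than \( \Theta \) has \( [\sigma] \in S \), so \( [f(\sigma)] \notin S' \). That is, \( f(\sigma) \) has fullness more than \( \alpha\Theta \). Note that \( K \) does not depend on \( U \) or \( C \), as required, because it comes from part (1). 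Part (2) of that lemma is stated for orientation preserving maps implicitly through part (1), and I would assume \( f \) is orientation preserving here as well.

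The main obstacle is the continuity and closedness step: making sure that \( S' \), which contains degenerate and negatively oriented shapes, is genuinely closed and disjoint from \( S \). That is why I use the normalized representatives, where the diameter is constantly \( 1 \) and cannot cause a discontinuity. The rest is a direct application of the earlier lemma.
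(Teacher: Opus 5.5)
Your proposal is correct and matches the paper's proof. The paper uses the same sets \(S = \Theta_{\R^{d}}^{-1}([\Theta,\infty))\) and \(S' = \Theta_{\R^{d}}^{-1}((-\infty,\alpha\Theta])\), the continuity of the oriented fullness on \(\shapes^{d+1}\), and Lemma \ref{thm-lem small dilatation preserves shapes}, with part (1) giving \(K\) and part (2) giving \(\delta\). Your extra remarks are also correct and are left implicit in the paper: the extension of fullness by \(0\) to degenerate shapes, and the observation that signed fullness requires \(f\) to be orientation preserving.
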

\begin{proof}
  This follows from the continuity of \( \Theta_{\R^{d}} \) and Lemma \ref{thm-lem small dilatation preserves shapes} with
  \begin{align*}
    S & = \Theta^{-1}_{\R^{d}}([\Theta, \infty)) \\
    S' & = \Theta^{-1}_{\R^{d}}((-\infty, \alpha\Theta]) .
  \end{align*}
\end{proof}

\subsection{Fullness of Riemannian simplices}

Now fix a closed oriented Riemannian \( d \)-manifold \( M \).
For any simplex \( \sigma \in M \) of diameter below the injectivity radius, define the \emph{internal fullness} of \( \sigma \) by
\[ \Theta_{M}(\sigma) := \Theta_{\R^{d}}(\exp_{\sigma(0)}^{-1}(\sigma)) . \]
Here
\[ \exp_{\sigma(0)} \colon T_{\sigma(0)} M \to M \]
is the Riemannian exponential map at \( \sigma(0) \), and fullness of \( \exp_{\sigma(0)}^{-1}(\sigma) \) is the Euclidean fullness measured with respect to the Riemannian metric on \( T_{\sigma(0)} M \).

It is also useful to measure simplex quality relative to an isometric embedding \( \iota \colon M \to \R^{m} \);
denote the \emph{ambient fullness} of \( \sigma \in M \) by
\[ \Theta_{\iota}(\sigma) := \Theta_{\R^{m}}(\iota(\sigma)) . \]

For top dimensional \( d \)-simplices, the internal fullness can be extended to have sign depending on the orientation relative \( T_{\sigma(0)} M \) (as was done for fullness of Euclidean simplices.)
The situation is more subtle for ambient fullness, because ambient \( d \)-simplices in \( \R^{m} \) do not have an orientation.
However, the next lemma and its corollary show that the two notions of fullness are commensurate for sufficiently small simplices in \( M \); for these small-enough simplices we can also give the ambient fullness a sign depending on the projection to the tangent plane of \( M \) at \( \sigma(0) \).

\begin{lemma}\label{thm: lemma small secant to tangent projections bilipschitz}
  Fix a dilatation \( K > 1 \), a fullness \( \Theta > 0 \), and constant \( \alpha \in (0,1) \).
  There exists a diameter \( \delta > 0 \) such that for any \( \delta \)-small and \( \Theta \)-full (either of ambient or internal) \( d \)-simplex, the orthogonal projection from the secant plane through \( \sigma \) to the tangent plane \( T_{\sigma(0)} M \subset \R^{m} \) is \( K \)-quasiconformal.
\end{lemma}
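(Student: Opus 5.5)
The plan is to reduce the statement to a uniform estimate on the angle between the secant plane of \( \sigma \) and the tangent plane \( T_{\sigma(0)} M \), and then to observe that an orthogonal projection between \( d \)-planes making a small angle is a linear map close to an isometry. Let \( P \) be the affine \( d \)-plane through \( \iota(\sigma(0)), \ldots, \iota(\sigma(d)) \), with direction space \( V \subset \R^{m} \), and let \( \pi \colon V \to T_{\sigma(0)} M \) be orthogonal projection. If every unit vector \( v \in V \) lies within distance \( \eta \) of \( T_{\sigma(0)} M \), then \( \sqrt{1-\eta^{2}}\,|v| \le |\pi v| \le |v| \). Hence \( \pi \) is \( (1-\eta^{2})^{-1/2} \)-bilipschitz and so \( (1-\eta^{2})^{-1} \)-quasiconformal. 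It therefore suffices to find \( \delta \) such that every \( \delta \)-small, \( \Theta \)-full simplex has \( \eta \le \eta_{0} \), where \( (1-\eta_{0}^{2})^{-1} \le K \).

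The first estimate controls the edge vectors. Write \( w_{i} = \iota(\sigma(i)) - \iota(\sigma(0)) \). Since \( M \) is compact and smoothly embedded, Taylor expansion of \( \iota \composed \exp_{\sigma(0)} \) gives a uniform constant \( c \) with
\[ \operatorname{dist}(w_{i}, T_{\sigma(0)} M) \le c\,|w_{i}|^{2} \le c\,\delta^{2} , \]
since the second fundamental form is uniformly bounded. The same expansion also shows that intrinsic and ambient distances agree to first order.

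The second estimate passes from edges to all of \( V \), and this is where fullness enters. Any unit \( v \in V \) can be written as \( v = \sum a_{i} w_{i} \). Because the simplex is \( \Theta \)-full in the ambient sense, the vectors \( w_{i}/\diam \) form a basis of \( V \) with Gram determinant bounded below in terms of \( \Theta \) (a volume lower bound with diameter \( 1 \)). The set of such normalized frames is compact, so the coefficients obey \( \sum |a_{i}| \le C(\Theta, d)/\diam(\sigma) \). Consequently
\[ \operatorname{dist}(v, T_{\sigma(0)} M) \le \sum |a_{i}|\, c\,|w_{i}|^{2} \le C(\Theta,d)\, c\, d\, \diam(\sigma) \le C'\delta , \]
where the diameter is ambient; intrinsic and ambient diameters agree up to a factor near \( 1 \). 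Choosing \( \delta \) small makes \( \eta \le \eta_{0} \), which proves the claim for ambient-full simplices. The constant \( \alpha \) plays no role in this step.

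The main obstacle is the internal-fullness case, since the coefficient bound above uses fullness of the ambient simplex. To handle it, I would show that internal \( \Theta \)-fullness implies ambient \( \alpha\Theta \)-fullness for small \( \delta \); this is presumably the role of \( \alpha \). The natural route goes through the linear map \( A \colon T_{\sigma(0)} M \to V \) sending \( \exp^{-1}_{\sigma(0)}(\sigma(i)) \) to \( w_{i} \). By Lemma \ref{thm: lem small exponentials are near isometric} and the Taylor expansion, \( w_{i} = \exp^{-1}(\sigma(i)) + O(\delta^{2}) \). Using the same coefficient bound, now applied to the internal simplex, which is \( \Theta \)-full, this gives \( \|A - \mathrm{id}\| = O(\delta) \). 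It follows that volume and diameter change by factors tending to \( 1 \), so the ambient fullness is at least \( \alpha\Theta \). The ambient argument then applies with \( \alpha\Theta \) in place of \( \Theta \). I expect the bookkeeping here, namely identifying the internal simplex with its ambient counterpart up to \( O(\delta^{2}) \) errors relative to size \( \delta \), to be the only delicate point.
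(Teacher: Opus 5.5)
Your proposal is correct and follows the paper's route. Edge vectors lie within \( O(\diam^{2}) \) of \( T_{\sigma(0)} M \), and fullness bounds the coefficients of unit vectors of the secant plane, so the two planes are \( O(\delta) \)-close; an orthogonal projection between nearby planes is then nearly isometric, hence nearly conformal. The paper phrases this through Whitney's Lemma 14c and the bound \( |v_{i} - w_{i}| < \epsilon |v_{i}| \).

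Your explicit treatment of the internal-fullness case, through the near-identity linear map \( A \) with errors measured at the scale of the actual diameter, fills in a step the paper only asserts. Proving it directly instead of quoting Corollary \ref{thm: cor internal ambient fullness commensurate} is the right choice, since the paper derives that corollary from this very lemma.
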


\begin{proof}
  This is one of the key points in the proof of Whitehead's Triangulation Theorem, \cite{Whitney1957--GeometricIntegrationTheory}*{Lemma 14c}, albeit not stated in terms of quasiconformality.
  For any lower bound on fullness \( \Theta \), every small enough simplex \( \sigma \in M \subset \R^{m} \) has secant plane sufficiently close to the (ambient) tangent plane \( T_{\sigma(0)} M \) when measured in the metric of the Grassmannian \( \Gr_{d}(\R^{m}) \).
  Indeed, for any \( \epsilon > 0 \), the diameter \( \delta \) can be chosen small enough that the base vectors \( v_{i} = \sigma(0)\sigma(i) \) of any \( \delta \)-small simplex \( \sigma \) are very close to their tangent plane projections \( w_{i} \in T_{\sigma(0)} M \),
  \begin{equation}\label{eq: tangent projections bound}
    | v_{i} - w_{i} | < \epsilon | v_{i} | .
  \end{equation}
  If \( \sigma \) is \( \Theta \)-full, the basis \( v_{i} \) of the secant plane is bounded away from degeneracy, and the bound \eqref{eq: tangent projections bound} leads to a bound on the difference between the tangent and secant planes, which is in fact linear in \( \epsilon \).
  The orthogonal projection from a plane in \( \R^{m} \) to a nearby plane is nearly isometric, and thus nearly conformal.
  This suffices to prove the claim.

\end{proof}

The previous Lemma has the following Corollary, by a direct application of Lemma \ref{thm: lem small dilatation preserves fullness}.

\begin{corollary}\label{thm: cor internal ambient fullness commensurate}
  Fix \( M \), a fullness \( \Theta > 0 \) and constant \( \alpha \in (0,1) \).
  \begin{enumerate}
  \item
    There exists a diameter \( \delta > 0 \) such that any \( \delta \)-small \( d' \)-simplex of \( M \) with ambient fullness \( \Theta \) has internal fullness at least \( \alpha \Theta \).

  \item
    There exists a diameter \( \delta > 0 \) such that any \( \delta \)-small \( d' \)-simplex of \( M \) with internal fullness \( \Theta \) has ambient fullness at least \( \alpha \Theta \).
  \end{enumerate}
\end{corollary}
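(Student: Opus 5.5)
The plan is to pass between the two notions of fullness through the tangent plane \( T_{\sigma(0)} M \), using two comparison maps. The first is the orthogonal projection \( \pi \) from the secant plane of \( \iota(\sigma) \) to \( T_{\sigma(0)} M \), controlled by Lemma \ref{thm: lemma small secant to tangent projections bilipschitz}. The second is the map \( \exp_{\sigma(0)}^{-1} \) composed with the identification of \( T_{\sigma(0)} M \) with the ambient affine tangent plane through \( \iota(\sigma(0)) \), controlled by Lemma \ref{thm: lem small exponentials are near isometric}. The key observation is that both \( \pi(\iota(\sigma)) \) and \( \exp_{\sigma(0)}^{-1}(\sigma) \) are simplices in the same \( d \)-plane. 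Their vertices are related by the map
\[ \phi = \exp_{\sigma(0)}^{-1} \composed \iota^{-1} \composed \nu|_{T} , \]
where \( \nu|_{T} \) is the normal projection restricted to the tangent plane near \( \sigma(0) \). This map is a diffeomorphism with derivative the identity at \( \sigma(0) \), so on a small ball it is \( K \)-quasiconformal, indeed nearly isometric, for any prescribed \( K > 1 \). Since fullness is a function of shape, we may rescale and apply Lemma \ref{thm: lem small dilatation preserves fullness} to \( \phi \) in a uniform way.

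For part (1), fix \( \alpha \) and write \( \alpha = \alpha_{1}\alpha_{2} \) with \( \alpha_{1}, \alpha_{2} \in (0,1) \). First I will show that \( \pi \) nearly preserves fullness of the ambient simplex \( \iota(\sigma) \). Lemma \ref{thm: lemma small secant to tangent projections bilipschitz} says \( \pi \) is nearly isometric, with the bound uniform in \( \epsilon \) as in its proof. Volume then scales by a factor near \( 1 \), and diameter changes by a factor near \( 1 \). Hence \( \Theta_{\R^{d}}(\pi\iota\sigma) \ge \alpha_{1}\Theta \) once \( \delta \) is small. I will do this directly from the bilipschitz bound, since \( \pi \) is linear and Lemma \ref{thm: lem small dilatation preserves fullness} is not needed here. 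For \( d' < d \) the same argument applies to the lower-dimensional secant plane: the base vectors are again within \( \epsilon|v_{i}| \) of their tangent projections, and full simplices keep this control.

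Next, apply Lemma \ref{thm: lem small dilatation preserves fullness} with fullness \( \alpha_{1}\Theta \) and constant \( \alpha_{2} \), which gives a \( K \). Choose \( \delta \) so that \( \phi \) is \( K \)-quasiconformal on the \( \delta \)-ball about each point, uniformly in the point by compactness of \( M \). This needs a uniform compact-open pair \( C \subset U \), and I obtain one by translating \( \sigma(0) \) to the origin and rotating so the tangent plane is \( \R^{d} \). This transfers fullness to \( \exp_{\sigma(0)}^{-1}(\sigma) \) with loss factor \( \alpha_{2} \). For \( d' < d \) simplices one needs a version of Lemma \ref{thm: lem small dilatation preserves fullness} for lower-dimensional simplices. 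I expect the same compactness argument to apply verbatim, since \( \Theta \) on \( d' \)-simplices is still a continuous, similarity-invariant function on \( \shapes^{d'+1} \). Part (2) is the same chain run in reverse, using \( \phi^{-1} \) and \( \pi^{-1} \).

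The main obstacle is uniformity. The constants \( \delta \) must not depend on the point \( \sigma(0) \) or on the tangent-plane identification. I will handle this by using compactness of \( M \) and of its orthonormal frame bundle to get uniform \( C^{1} \) closeness of \( \phi \) to the identity on balls of radius \( \delta_{0} \). The rescaled form of Lemma \ref{thm: lem small dilatation preserves fullness} then gives a single \( \delta \) valid everywhere. A secondary subtlety is that Lemma \ref{thm: lemma small secant to tangent projections bilipschitz} assumes a fullness hypothesis. In part (2) we only know the internal fullness, so we must first establish the non-degeneracy of the secant simplex from the internal side. We get this by applying \( \phi^{-1} \) first, and then applying the projection lemma's internal-fullness variant.
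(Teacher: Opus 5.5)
Your architecture is right, and it is more careful than the paper's. The paper only applies Lemma~\ref{thm: lem small dilatation preserves fullness} to the secant-to-tangent projection of Lemma~\ref{thm: lemma small secant to tangent projections bilipschitz}. That compares ambient fullness with the fullness of the projected simplex \( \pi(\iota\sigma) \). It never relates \( \pi(\iota\sigma) \) to \( \exp_{\sigma(0)}^{-1}(\sigma) \), which is what internal fullness is defined by, and your second comparison map is the right missing bridge. Your linear-algebra treatment of \( \pi \) is fine, as is the extension to \( d' < d \) via Lemma~\ref{thm-lem small dilatation preserves shapes} with \( r = d'+1 \).

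The concrete error is in the bridge itself. The map \( \phi = \exp_{\sigma(0)}^{-1} \composed \iota^{-1} \composed \nu|_{T} \) does not carry the vertices of \( \pi(\iota\sigma) \) to those of \( \exp_{\sigma(0)}^{-1}(\sigma) \). The projection \( \pi \) moves \( \iota\sigma(i) \) along the normal space at \( \sigma(0) \). By contrast, \( \nu \) is nearest-point projection and moves a point back along the normal space at its foot point. So in general \( \nu(\pi(\iota\sigma(i))) \neq \iota\sigma(i) \). For example, on the unit circle in \( \R^{2} \) with \( \sigma(0) = (0,1) \) and \( \sigma(i) = (t,\sqrt{1-t^{2}}) \), one has \( \pi(\sigma(i)) = (t,1) \) but \( \nu(t,1) = (t,1)/\sqrt{1+t^{2}} \). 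Both maps have derivative the identity at the base point, but as written the fullness transfer is applied to the wrong simplex.

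The fix is to replace \( \nu|_{T} \) by \( P^{-1} \), where \( P \) is the ambient orthogonal projection of \( \iota(M) \), near \( \iota\sigma(0) \), onto the affine plane \( \iota\sigma(0) + T_{\sigma(0)}M \). Then \( P^{-1} \) is the graph parametrization of \( M \) over its tangent plane. It is again a local diffeomorphism with derivative the identity, with neighborhood size and \( C^{1} \)-closeness uniform by compactness. With \( \phi = \exp_{\sigma(0)}^{-1} \composed \iota^{-1} \composed P^{-1} \), the vertices do match, and the rest of your argument runs unchanged.

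Separately, in part (2) you do not need an internal-fullness variant of the projection lemma; the paper's proof of that lemma only uses the ambient hypothesis. Orthogonal projection never increases \( d' \)-volume. Every edge of a \( \delta \)-small simplex lies within angle \( O(\delta) \) of \( T_{\sigma(0)}M \), so diameters agree up to a factor \( 1 - C\delta \). Hence \( \Theta_{\iota}(\sigma) \ge (1-C\delta)^{d'}\,\Theta_{\R^{d}}(\pi(\iota\sigma)) \) with no fullness hypothesis at all.
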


Continuing with the embedding \( M \to \R^{m} \), fix a small tubular neighborhood \( M_{\epsilon} \) of \( M \), as well as the normal projection
\[ \nu \colon M_{\epsilon} \to M . \]
The following is well understood, for example \cite{Whitney1957--GeometricIntegrationTheory}*{Lemma 21a}.
The point again is that sufficiently small full simplices on \( M \) are close to being tangent to \( M \).

\begin{lemma}\label{thm: lemma small normal projections}
  Fix a fullness \( \Theta > 0 \).
  There exists a diameter \( \delta > 0 \) such that for any \( \delta \)-small \( \Theta \)-full simplex \( \sigma \) in \( M \),

  \begin{enumerate}
  \item
    the convex hull of \( \sigma \) is contained in \( M_{\epsilon} \),

  \item
    the normal projection \( \nu \) restricted to \( \sigma \) is a smooth embedding \( \sigma \to M \), and

  \item
    the orientation of the embedding \( \sigma \to M \) is the same as the orientation of the orthogonal projection of \( \sigma \) to \( T_{\sigma(0)} M \).
  \end{enumerate}
\end{lemma}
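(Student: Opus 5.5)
We prove the three claims in order, reducing each to a uniform estimate: a \( \delta \)-small \( \Theta \)-full simplex lies within distance \( O(\delta^{2}) \) of the affine tangent plane \( \iota(\sigma(0)) + T_{\sigma(0)} M \). By Corollary \ref{thm: cor internal ambient fullness commensurate} we may assume ambient fullness, at the cost of replacing \( \Theta \) by \( \alpha\Theta \).

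\emph{Step 1: claim (1).} Since \( M \) is compact and smoothly embedded, there is a constant \( c \), depending only on the second fundamental form, such that every \( y \in M \) with \( |y - x| < \delta \) satisfies \( \operatorname{dist}(y, x + T_{x}M) \le c\,\delta^{2} \). Every point of \( |\iota\composed\sigma| \) is a convex combination of the vertices, so it lies within \( \delta \) of the vertex \( \sigma(0) \), which is in \( M \). Hence once \( \delta < \epsilon \), the convex hull lies in \( M_{\epsilon} \). Fullness is not needed for this step.

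\emph{Step 2: claim (2).} The differential of \( \nu \) at a point \( p \in M \) is the orthogonal projection onto \( T_{p}M \). By smoothness and compactness, at a point \( q \) within distance \( \eta \) of \( M \), \( \d_{q}\nu \) differs from the projection onto \( T_{\nu(q)}M \) by \( O(\eta) \). Also \( T_{\nu(q)}M \) differs from \( T_{\sigma(0)}M \) by \( O(\delta) \) in the Grassmannian. Now invoke Lemma \ref{thm: lemma small secant to tangent projections bilipschitz}: for \( \delta \) small, the orthogonal projection from the secant plane \( P \) of \( \sigma \) to \( T_{\sigma(0)}M \) is \( K \)-quasiconformal with \( K \) near \( 1 \). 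In fact it is nearly isometric, because the plane-distance bound there is linear in \( \epsilon \). Therefore \( \d\nu \) restricted to \( P \) is injective at every point of \( |\sigma| \), so \( \nu|_{|\sigma|} \) is an immersion.

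Injectivity of \( \nu|_{|\sigma|} \) is where fullness genuinely enters, and is the main obstacle. We write \( \nu|_{|\sigma|} = \pi + e \), where \( \pi \) is the orthogonal projection of \( P \) onto \( T_{\sigma(0)}M \), followed by the identification of a neighbourhood with \( M \) via \( \exp \) or the graph chart. The estimates above give \( \|\d e\| = O(\delta) \). Meanwhile \( \pi \) is bilipschitz with constant near \( 1 \) on \( P \), and this constant is uniform over all \( \Theta \)-full simplices. A map of the form (bilipschitz) \( + \) (small \( C^{1} \) perturbation) on a convex set is injective. So after shrinking \( \delta \), \( \nu|_{|\sigma|} \) is an injective immersion of a compact set, hence a smooth embedding.

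\emph{Step 3: claim (3).} The homotopy \( t\,\d\nu + (1-t)\pi \) stays among isomorphisms \( P \to T \) for all \( t \), since both endpoints are within the small perturbation bound of each other. Here the tangent spaces are identified along the short path in the Grassmannian, which is canonical once \( \delta \) is small. Isomorphisms joined by such a homotopy induce the same orientation, so the orientation of \( \nu|_{\sigma} \) agrees with that of the projection to \( T_{\sigma(0)}M \). Finally, take \( \delta \) to be the minimum of the thresholds found in the three steps.
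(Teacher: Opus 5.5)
Your proof is correct and follows essentially the paper's approach. The paper gives no independent proof: it cites Whitney's Lemma 21a and remarks that small full simplices are nearly tangent to \( M \). Your argument is a faithful expansion of that idea: fullness puts the secant plane near \( T_{\sigma(0)}M \), so \( \nu \) on \( |\sigma| \) is a small \( C^{1} \) perturbation of a near-isometric linear projection, which gives the injective immersion and the orientation match. One presentational point: the decomposition \( \nu = \pi + e \) and the orientation homotopy are cleanest after composing with the orthogonal projection onto the affine plane \( \sigma(0) + T_{\sigma(0)}M \), which is an orientation-preserving chart of \( M \) near \( \sigma(0) \), rather than identifying tangent spaces along a path in the Grassmannian.
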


\subsection{Triangulations of uniformly bounded fullness}

To end this section, we recall one further Lemma.
It is an essential part of the proof of the Whitney triangulation Theorem for smooth manifolds, and can be found for example in \cites{Shikata1966--DistanceFunctionSetDifferentiableStructures, Whitney1957--GeometricIntegrationTheory}.
\begin{lemma}\label{thm: lem fine quasielliptic triangulations}
  In each dimension \( d \), there exists \( \Theta_{d} > 0 \) such that every closed Riemannian \( d \)-manifold admits arbitrarily fine triangulations with all simplices of fullness at least \( \Theta_{d} \).
\end{lemma}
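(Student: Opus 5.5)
The plan is to reduce the statement to the classical Whitney/Cairns construction of triangulations by projecting a fine, uniformly full triangulation of ambient space onto \( M \). Fix the isometric embedding \( \iota \colon M \to \R^{m} \) and the normal projection \( \nu \colon M_{\epsilon} \to M \). The uniform bound is to come from the fact that Whitney's construction only uses a standard cubical (or Freudenthal/Kuhn) triangulation of \( \R^{m} \) scaled by a small parameter \( h \). The simplices of that grid triangulation realise only finitely many shapes, so their fullness is bounded below by a constant depending only on \( m \).

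First, choose a rescaled standard triangulation \( L_{h} \) of \( \R^{m} \) with mesh \( h \), and perturb its vertices by a tiny generic amount, at most \( \eta h \) with \( \eta \) fixed small. This puts \( L_{h} \) in general position with respect to \( \iota(M) \), and by continuity of fullness in the vertices it changes fullness only by a bounded factor. Second, following Whitney, intersect \( \iota(M) \) with the simplices of \( L_{h} \). Each \( d \)-dimensional piece, together with the barycentric points on the crossed \( (m-d) \)-faces, produces a secant \( d \)-complex \( K_{h} \) lying near \( \iota(M) \). Third, use Lemma \ref{thm: lemma small normal projections} to show that for \( h \) small, \( \nu|_{|K_{h}|} \) is a piecewise smooth homeomorphism onto \( M \). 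This gives a triangulation of \( M \) with mesh of order \( h \).

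The heart of the matter, and the main obstacle, is proving a lower bound on the fullness of the simplices of \( K_{h} \) that depends only on the dimension. I would first try Whitney's argument, Lemma 14c and section IV of \cite{Whitney1957--GeometricIntegrationTheory}. Because \( M \) is nearly flat at scale \( h \), the intersection with each grid simplex looks like a \( d \)-plane meeting a simplex of fixed shape. The generic perturbation ensures the plane stays quantitatively transverse, with a bound depending on \( m \) and \( \eta \) only. The resulting vertex configurations then range over a compact family of non-degenerate shapes, on which fullness is bounded below by compactness, exactly as in the compactness of \( \shapes^{r} \).

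Because \( m \) could depend on \( M \), I would remove this dependence by working instead in normal coordinates. Cover \( M \) by charts \( \exp_{x} \) on balls of a radius given by Lemma \ref{thm: lem small exponentials are near isometric}, so that these charts are nearly isometric. Then run the same construction with \( d \)-dimensional grids in \( T_{x}M \), gluing the pieces via the Whitehead/Shikata procedure, \cite{Shikata1966--DistanceFunctionSetDifferentiableStructures}. By Corollary \ref{thm: cor internal ambient fullness commensurate}, internal fullness then controls everything, and the bound depends only on \( d \).

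Finally, the internal fullness of the projected simplices differs from that of the secant simplices by a factor \( \alpha \) close to \( 1 \). This follows from Lemma \ref{thm: lemma small secant to tangent projections bilipschitz} and Corollary \ref{thm: cor internal ambient fullness commensurate}. Letting \( h \to 0 \) gives arbitrarily fine triangulations with fullness at least \( \Theta_{d} \), where \( \Theta_{d} \) is half the grid constant.
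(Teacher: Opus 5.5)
\textbf{The gap is in the step that makes the constant depend only on \( d \).} Your main route is Whitney's construction: perturb a standard triangulation of \( \R^{m} \), form the secant complex \( K_{h} \), and project it normally onto \( M \). This is what the paper relies on; it simply cites Whitney and Shikata. Deferring the quantitative general position and the non-degeneracy of the simplices of \( K_{h} \) to \cite{Whitney1957--GeometricIntegrationTheory} is acceptable.

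The trouble is the step you add afterwards: running \( d \)-dimensional grids in normal-coordinate charts and then ``gluing the pieces via the Whitehead/Shikata procedure''. Grids built in different tangent spaces \( T_{x}M \) do not agree on overlaps. The standard way to patch chart-wise triangulations is by common subdivision on the overlaps, and this produces simplices of arbitrarily small fullness. Two simplices from different grids can meet in an arbitrarily thin polytope, and how badly this happens is governed by how the charts sit relative to each other, i.e.\ by \( M \). You give no mechanism that controls fullness during the gluing, so as written this step does not produce a \( \Theta_{d} \) independent of \( M \).

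\textbf{No gluing is needed.} The dependence on \( m \) is already a dependence on \( d \) alone. By Nash's theorem \cite{Nash1956--ImbeddingProblemRiemannianManifolds}, every closed Riemannian \( d \)-manifold embeds isometrically in \( \R^{m} \) with \( m = m(d) \), and you may take this as the fixed embedding.
\begin{itemize}
\item Whitney's lower bound on fullness depends only on \( m \), \( d \) and the perturbation parameter \( \eta \); only the admissible mesh \( h \) depends on \( M \).
\item So the construction in \( \R^{m(d)} \) gives ambient fullness at least some \( \Theta(m(d),d) \).
\item Corollary \ref{thm: cor internal ambient fullness commensurate} converts this into internal fullness at least \( \alpha\Theta(m(d),d) \) once \( h \) is small. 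Internal fullness is intrinsic, so the choice of embedding is harmless.
\end{itemize}

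\textbf{Smaller corrections.}
\begin{itemize}
\item Your closing value ``half the grid constant'' is unjustified. The simplices of \( K_{h} \) are spanned by barycentres of intersection points, not grid vertices. Their fullness is the much smaller constant coming out of Whitney's non-degeneracy estimate.
\item Lemma \ref{thm: lemma small normal projections} only says that \( \nu \) embeds each simplex. Global bijectivity of \( \nu|_{|K_{h}|} \) still needs Whitney's argument that \( K_{h} \) is a closed pseudomanifold, plus a degree count as in Lemma \ref{thm-lem locally degree 1 map is homeo}.
\item A merely generic perturbation gives only qualitative transversality. The uniform quantitative bound comes from Whitney's vertex-by-vertex choice of positions avoiding sets of small measure.
\end{itemize}
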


\section{Approximation of almost conformal mappings by PL-homeomorphisms}\label{sec: pl approximation of nearly conformal}

We can now prove the \( \pl \)-approximation Theorem \ref{thm: pl approximation of nearly conformal}, as stated in the introduction.
Recall the statement:
\emph{In each dimension \( d \) there is a conformal dilatation \( K_{\pl} > 1 \) such that every \( K_{\pl} \)-quasiconformal mapping \( f \colon M \to N \) between smooth closed Riemannian \( d \)-manifolds can be arbitrarily well approximated by \( \pl \)-homeomorphisms in the \( C^{0} \) function topology.
Furthermore, for each \( K' > 1 \), there exists \( K > 1 \) such that the \( \pl \)-approximation of any \( K \)-quasiconformal mapping can be chosen to be \( K' \)-quasiconformal.}

\begin{proof}
  Fix the fullness lower bound \( \Theta = \Theta_{d} \) as in Lemma \ref{thm: lem fine quasielliptic triangulations}, so that there exist arbitrarily fine internally \( \Theta \)-full triangulations.
  Fix any \( \alpha \in (0,1) \).
  By Lemma \ref{thm: lem small dilatation preserves fullness}, there is a conformal dilatation \( K > 1 \) such that on any domain, small enough \( \Theta \)-full simplices in a compact subset are sent by all \( K \)-quasiconformal maps to \( \alpha\Theta \)-full simplices.
  It will suffice to choose any \( K_{\pl} < K \).

  Let \( K' = K / K_{\pl} \), so that \( 1 < K' < K \) and \( K = K_{\pl} K' \).
  Given any \( K_{\pl} \)-quasiconformal homeomorphism \( f \colon M \to N \), we will demonstrate a radius \( \delta > 0 \) below which every \( \delta \)-fine, \( \Theta \)-full triangulation has secant approximation that is a \( \pl \)-homeomorphism.

  First note that, because \( M \) is compact, \( f \) is uniformly continuous, and any bound on radius in \( N \) can be enforced by a bound on radius in \( M \);
  we will use this without further comment.
  By Lemma \ref{thm: lem small exponentials are near isometric}, we may choose \( \delta' \) small enough that for every \( x \in M \), the exponential map \( T_{x} M \to M \) is \( \sqrt{K'} \)-quasiconformal on the \( \delta' \)-ball about origin.
  We may similarly choose \( \delta' \) small enough that the image of every \( B(x,\delta') \) in \( M \) is sent by \( f \) to small enough domain that the exponential map at \( f(x) \) is \( \sqrt{K'} \)-quasiconformal onto \( f(B(x,\delta')) \).
  In doing so, it holds that for each \( x \in M \), the restriction to \( B(x, \delta') \) of
  \[ f_{x} := \exp^{-1}_{f(x)} \composed f \composed \exp_{x} \]
  is \( K \)-quasiconformal.

  Choose a finite subcover of \( M \) by balls \( U_{i} = B(x_{i}, \delta') \) such that the interiors of the closed sub-balls \( C_{i} = \bar{B}(x_{i}, \delta' / 2) \) cover \( M \).
  Because of the choice of \( K \) (Lemma \ref{thm: lem small dilatation preserves fullness}), we may find a smaller radius \( \delta \) so that each map \( f_{x_{i}} \) sends every \( \delta \)-small and \( \Theta \)-full simplex \( \sigma \) in \( \exp^{-1}_{x_{i}} (C_{i}) \) to a simplex \( f_{x_{i}}(\sigma) \) of fullness at least \( \alpha \Theta \) in \( \exp^{-1}_{f(x_{i})}(f(C_{i})) \).
  By Corollary \ref{thm: cor internal ambient fullness commensurate}, we may choose \( \delta \) small enough that all such \( f(\sigma) \) are of ambient fullness at least \( \alpha^{2} \Theta \).
  Furthermore, by Lemma \ref{thm: lemma small normal projections}, there is \( \delta \) small enough that the normal projection of \( f(\sigma) \) is a positively oriented embedding for any \( \delta \)-small simplex in \( M \).
  Finally, choose \( \delta \) smaller than the Lebesgue number of the covering by \( C_{i} \), so that any \( \delta \)-small simplex is fully contained in at least one \( C_{i} \).

  This \( \delta \) chosen, for any \( \delta \)-fine and \( \Theta \)-full triangulation \( T \) of \( M \), the secant approximation of \( f \) along \( T \) will be a \( \pl \) mapping whose restriction to each simplex is an embedding of positive orientation.
  For possibly smaller \( \delta \), the \( \pl \)-approximation will have topological degree \( 1 \), and thus be a \( \pl \)-homeomorphism (Lemma \ref{thm-lem locally degree 1 map is homeo}).
  Finally, for any given \( \epsilon > 0 \), yet smaller \( \delta \) will suffice for the \( \pl \)-homeomorphism to be \( \epsilon \)-close to \( f \).

  The claim that the dilatation of the \( \pl \)-approximation can be made small follows from Lemma \ref{thm: lem control on affine distortion}.
  For \( f \) of sufficiently small dilatation and on small enough simplices, the \( \pl \)-approximation can be made to barely distort the shape of each simplex, and so to have small dilatation on each simplex.
\end{proof}

\section{Homeomorphisms far from being piecewise linear}\label{sec: non plizable mappings}

We turn to the proof of Theorem \ref{thm: non pl approximable homeo}, the existence of an orientation preserving quasiconformal homeomorphism \( f \colon M \to M \) that cannot be approximated by any \( \pl \)-homeomorphism.
It is a quick corollary of several well understood results in differential topology, which we recall now.
Throughout, we work only with closed manifolds.

\subsection{Local contractibility of homeomorphisms}\label{sec: local contractibility of homeos}

For a compact manifold \( M \), the group \( \Homeo(M) \) of self-homeomorphisms of \( M \) is well known to be a topological group when given the \( C^{0} \) topology.
For this, fix any continuous metric \( d \) on \( M \) and let the opens of the \( C^{0} \) topology about a given \( f \in \Homeo(M) \) be generated by the \( \epsilon \)-neighborhoods
\[ \{ g \in \Homeo(M) \colon d(f(x), g(x)) < \epsilon \quad \mbox{ for all } \quad x \in M \} \]
for each \( \epsilon > 0 \).
We say that \( g \) in such a neighborhood is \emph{\( \epsilon \)-close to} \( f \).
It is not difficult to check that this topology on \( \Homeo(M) \) does not depend on the choice of \( d \) (because \( M \) is compact).

The following Theorem was shown independently by \v{C}ernavski\u{\i} \cite{Cernavski1969--LocalContractibilityGroupHomeomorphismsManifold} and Edwards-Kirby \cites{Edwards-Kirby1971--DeformationsSpacesImbeddings}.
\begin{theorem}\label{thm-local contractibility of homeomorphisms}
  For a closed topological manifold \( M \), the homeomorphism group \( \Homeo(M) \) is locally contractible.
\end{theorem}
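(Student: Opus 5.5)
The statement to be proved is Theorem \ref{thm-local contractibility of homeomorphisms}, the local contractibility of \( \Homeo(M) \) for closed \( M \). This is a deep classical result, and I would follow the Edwards--Kirby route, which reduces it to a statement about deformations of embeddings of compact sets into open sets. The key intermediate claim is the following. For an open subset \( U \subseteq M \) and compact \( C \subset U \), there is a neighborhood \( \mathcal{N} \) of the inclusion in the space of embeddings \( U \to M \). On \( \mathcal{N} \) there is a deformation \( \Phi_{t} \), with \( \Phi_{0} \) the identity and \( \Phi_{1}(h) \) agreeing with the inclusion near \( C \). The deformation should fix the inclusion, preserve agreement with the inclusion wherever it already holds near a prescribed closed set, and be compactly supported. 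Given this, I cover \( M \) by finitely many coordinate balls and write \( M = C_{1} \cup \cdots \cup C_{k} \) with each \( C_{i} \) compact in a chart. I then successively deform a homeomorphism \( h \) near the identity to one that is the identity near \( C_{1} \), then near \( C_{1} \cup C_{2} \), and so on. Concatenating these deformations gives a contraction of a neighborhood of the identity in \( \Homeo(M) \). Translating by group multiplication then gives local contractibility at every point.

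The local statement is proved in \( \R^{d} \) by the torus trick. First I prove the handle version: embeddings \( B^{k} \times \R^{d-k} \to \R^{d} \) near the inclusion can be canonically deformed to the inclusion near \( B^{k} \times 0 \), relative to a neighborhood of \( \partial B^{k} \times 0 \). To do this, I immerse the punctured torus \( T^{k} \setminus \mathrm{pt} \) (crossed with \( \R^{d-k} \)) into \( \R^{d} \) and pull back the embedding \( h \) to an embedding near the identity of \( T^{k} \times \R^{d-k} \). This step uses the fact that \( h \) is close to the identity, so that lifting is canonical. Next I extend over the missing point using the Alexander trick or Kister--Mazur type arguments; this extension is canonical and continuous in \( h \). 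I then pass to the covering \( \R^{k} \times \R^{d-k} \), where the lift is a bounded-distance homeomorphism. Finally, a radial conjugation (shrinking \( \R^{d} \) into a ball) makes it extend by the identity at infinity and gives a canonical isotopy to the identity near a compact set. The isotopy is transported back down, and all choices must be continuous in \( h \).

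The main obstacle is ensuring that every step is \emph{canonical}, i.e. continuous in the parameter \( h \), and that the constructions respect the relative (boundary) conditions. In particular, one must check that the Alexander-trick extension and the bounded-homeomorphism isotopy depend continuously on \( h \). Handling the relative version over \( \partial B^{k} \) also requires care, with an induction on \( k \) over a handle decomposition of a neighborhood of \( C \). Since this is standard and long, in the paper I would simply cite \v{C}ernavski\u{\i} and Edwards--Kirby rather than reproduce it.
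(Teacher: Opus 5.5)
The paper gives no argument for this theorem. It simply cites \v{C}ernavski\u{\i} and Edwards--Kirby, so your decision to cite those sources is exactly the paper's route. The outline you attach, however, has a step that fails as written. You cannot extend the lifted embedding over the puncture of $T^{k}$ by the Alexander trick or by Kister--Mazur.

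After lifting, the boundary sphere of a small disk around the puncture is carried to a nearby but \emph{different} sphere. The Alexander trick only cones off self-homeomorphisms of a fixed sphere. So you would first need to identify the region bounded by the image sphere with a ball, continuously in $h$. That is a parametrized annulus/Schoenflies statement, essentially as hard as what is being proved. Kister--Mazur does not supply it either: it canonically shrinks embeddings $\R^{d}\to\R^{d}$ fixing a point to homeomorphisms, which has nothing to do with filling a puncture.

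Worse, the lift is not even defined near the puncture in general. The immersion $T^{k}\setminus\mathrm{pt}\to\R^{k}$ has injectivity radius tending to $0$ there. So lifting $h$ requires $h$ to be the inclusion over the image of a neighborhood of the puncture.

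As I understand the Edwards--Kirby handle lemma, this is exactly how the puncture is dealt with. The lemma is applied only to embeddings that already agree with the inclusion near $\partial B^{k}\times\R^{d-k}$. Treating handles in order of increasing index is what supplies this. The immersion is then chosen so that a neighborhood of the puncture lies over that region. The lift is the identity there and extends over the puncture by the identity. The Alexander trick enters only at the end, applied to the compressed bounded lift.

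So in your handle statement, the condition near $\partial B^{k}$ must be a hypothesis on $h$, not merely something the deformation preserves. Since you intend to cite rather than reproduce the argument, none of this affects the proof as it would stand in the paper. But the sketch should not be offered as a description of the cited proof.
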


This statement can be rephrased in terms of isotopies between homeomorphisms.
Given two homeomorphic manifolds \( M, V \), say that a homeomorphism
\[ H \colon M \times [0,1] \to V \times [0,1] \]
is fibered over \( [0,1] \) if it preserves each fiber over \( [0,1] \).
\begin{definition}
  Fix two homeomorphisms \( h_{0}, h_{1} \colon M \to V \).
  An \emph{isotopy} from \( h_{0} \) to \( h_{1} \) is a homeomorphism
  \[ H \colon M \times [0,1] \to V \times [0,1] \]
  fibered over \( [0,1] \) and such that \( H|_{M \times \{i\}} = h_{i} \) for \( i = 0,1 \).
\end{definition}
For each \( t \in [0,1] \) one has the homeomorphism
\[ H_{t} = H(\cdot, t) \colon M \times \{t\} \to V \times \{t\} , \]
and so one may interpret an isotopy as a continuous path from \( h_{0} \) to \( h_{1} \) through homeomorphisms \( M \to V \).

In this language, Theorem \ref{thm-local contractibility of homeomorphisms} implies that every homeomorphism \( f \in \Homeo(M) \) admits a neighborhood comprising elements isotopic to \( f \).
In particular, if a homeomorphism \( f \colon M \to M \) is approximable by \( \pl \)-homeomorphisms, then there exists a close enough \( \pl \)-homeomorphism \( f_{\pl} \) to lie in its contractible neighborhood, and such \( f_{\pl} \) is isotopic to \( f \).
We need the contrapositive: \emph{if \( f \) is not isotopic to any \( \pl \)-homeomorphism, then it cannot be approximated by \( \pl \)-homeomorphisms.}

\subsection{The non-isotopic map}
Given a topological manifold \( V \) defined by an atlas of charts, recall that a \emph{\( \pl \)-structure on \( V \)} is a sub-atlas for which all transition maps are \( \pl \)-homeomorphisms.
In this case, we say that the topological manifold \( V \) underlies the \( \pl \)-structure.
The classical question, the so called Hauptvermutung, is whether \( V \) admits such a \( \pl \) reduction of charts, and if so, whether it is unique (up to \( \pl \)-homeomorphism).
The answer to this question is understood in general, with both existence and uniqueness depending on the topology of \( V \).
For a textbook exposition of the material, see \cite{Rudyak2016--PiecewiseLinearStructuresTopologicalManifolds}.

The overarching strategy is to reduce to a finer question that can be answered using the tools of classifying spaces and obstruction theory.
Without going into the full details, we make some definitions.
\begin{definition}
  Given two \( \pl \) manifolds \( M_{0}, M_{1} \) with the same underlying topological manifold \( V \), a \emph{\( \pl \)-concordance} between \( M_{0} \) and \( M_{1} \) is a \( \pl \)-structure on \( V \times [0,1] \) and \( \pl \)-homeomorphisms \( i_{0} \colon M_{0} \to V \times \{0\} \) and \( i_{1} \colon M_{1} \to V \times \{1\} \).
\end{definition}
In the definition, it is used the fact that a \( \pl \)-structure on a manifold (the product) induces a \( \pl \)-structure on its boundary.

The following is Definition 1.4.2 of \cite{Rudyak2016--PiecewiseLinearStructuresTopologicalManifolds}.
\begin{definition}[]
  A \emph{\( \pl \)-structuralization} of a topological manifold \( V \) is a \( \pl \)-manifold \( M \) and a homeomorphism
  \[ h \colon M \to V . \]
  Two \( \pl \)-structuralizations \( h_{i} \colon M_{i} \to V \) for \( i = 0,1 \) are \emph{concordant} if there exists a \( \pl \)-concordance \( M \times [0,1] \) between \( M_{0} \) and \( M_{1} \) as well as a \( \pl \)-structuralization
  \[ H \colon M \times [0,1] \to V \times [0,1] \]
  that restricts to the \( \pl \)-structuralizations \( h_{i} \) on boundary components, \( h_{0} = H \composed i_{0} \) and \( h_{1} = H \composed i_{1} \).
\end{definition}

The distinction between a \( \pl \)-structure and \( \pl \)-structuralization is somewhat subtle; the key point here is that there is a forgetful mapping from (concordance classes of) \( \pl \)-structuralizations to \( \pl \)-structures (up to \( \pl \)-homeomorphism).
The failure of this map to be injective is what gives homeomorphisms between \( \pl \)-structures that are not isotopic to any \( \pl \)-homeomorphism.
\begin{lemma}\label{thm: lem on concordances}
  Fix two structuralizations \( h_{0}, h_{1} \colon M \to V \) (in particular, with the same \( \pl \)-structure \( M \) on domain) and let
  \[ f = h^{-1}_{1} h_{0} \colon M \to M . \]
  \begin{enumerate}
  \item
    If \( h_{0}, h_{1} \) are isotopic, then they are concordant.

  \item
    If \( f \) is a \( \pl \)-homeomorphism, then \( h_{0}, h_{1} \) are concordant.

  \item
    If \( f \) is isotopic to a \( \pl \)-homeomorphism, then \( h_{0}, h_{1} \) are concordant.
  \end{enumerate}
\end{lemma}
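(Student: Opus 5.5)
For part (1), I would take an isotopy \( H \colon M \times [0,1] \to V \times [0,1] \) from \( h_{0} \) to \( h_{1} \) and regard it as the required structuralization of \( V \times [0,1] \). The concordance is the product \( \pl \)-structure on \( M \times [0,1] \), with \( i_{0}, i_{1} \) the inclusions of the two ends. The product of a \( \pl \)-manifold with an interval is a \( \pl \)-manifold, and \( H \) is a homeomorphism. Since \( H \) is fibered, it restricts to \( h_{0} \) on \( M \times \{0\} \) and to \( h_{1} \) on \( M \times \{1\} \). So \( h_{a} = H \composed i_{a} \), which is exactly the definition of concordance. The only thing to verify is the bookkeeping convention: the \( \pl \)-structure on \( M \times [0,1] \) should restrict to the given structures on both ends. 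This holds because the product structure does so.

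For part (2), I would build the concordance using \( h_{0} \) on both ends and twisting the boundary identification at one end by \( f \). Define
\[ H = h_{0} \times \mathrm{id} \colon M \times [0,1] \to V \times [0,1] , \]
with the product \( \pl \)-structure on the domain. Take \( i_{0} \) to be the inclusion at \( 0 \), and \( i_{1} \colon M \to M \times \{1\} \) to be \( x \mapsto (f^{-1}(x), 1) \). Then \( i_{1} \) is a \( \pl \)-homeomorphism, because \( f \) is \( \pl \) and hence so is \( f^{-1} \). We have \( H \composed i_{0} = h_{0} \), and
\[ H \composed i_{1} = h_{0} f^{-1} = h_{0} h_{0}^{-1} h_{1} = h_{1} . \]
This gives the concordance. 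The one point that needs care is that the definition allows \( i_{1} \) to be an arbitrary \( \pl \)-homeomorphism onto the boundary component, not just the inclusion. If the definition is read more strictly, I would instead transport the \( \pl \)-structure by the homeomorphism \( (x,t) \mapsto (x,t) \) twisted by a collar, which achieves the same effect.

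For part (3), suppose \( f \) is isotopic to a \( \pl \)-homeomorphism \( g \). Then \( h_{1} g = h_{1} \composed g \) is isotopic to \( h_{1} f = h_{0} \): if \( F_{t} \) is the isotopy from \( f \) to \( g \), then \( h_{1} F_{t} \) is an isotopy. By part (1), \( h_{0} \) is concordant to \( h_{1} g \). By part (2), applied to the pair \( h_{1} g, h_{1} \), whose quotient is \( h_{1}^{-1} h_{1} g = g \) and hence \( \pl \), \( h_{1} g \) is concordant to \( h_{1} \). It then remains to know that concordance is transitive. I expect this to be the main obstacle. I would prove it by gluing two concordances \( M \times [0,1] \) along their common end. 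The glued \( \pl \)-structure is well defined because the two boundary structures are identified by \( \pl \)-homeomorphisms, namely \( i_{1} \) of the first concordance and \( i_{0} \) of the second. After gluing, I would reparametrize \( [0,2] \) to \( [0,1] \). The structuralization maps agree on the seam, so they also glue to a homeomorphism onto \( V \times [0,1] \).
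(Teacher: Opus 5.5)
Your proof is correct and follows the paper's argument. For (2), your product concordance with the twisted end map \( x \mapsto (f^{-1}(x),1) \) is exactly the mapping-cylinder concordance the paper invokes. For (3), your chain \( h_{0} \to h_{1} g \to h_{1} \) is the paper's chain \( h_{1} \to h_{0} F^{-1} \to h_{0} \), except that you post-compose with \( h_{1} \) rather than \( h_{0} \), and you sketch transitivity of concordance by gluing along the common end where the paper simply cites that concordance is an equivalence relation.
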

\begin{proof}
  (1) follows from the definition; \( M \times [0,1] \) has a \( \pl \)-structure as the product of \( \pl \)-manifolds, and the isotopy between \( h_{0}, h_{1} \) gives the concordance.
  (2) similarly follows from the definition by using the mapping cylinder of \( f \).
  (3) is a consequence of (1) and (2) and the fact that concordance is an equivalence relation.
  Precisely, if \( f \) is isotopic to a \( \pl \)-homeomorphism \( F \), then \( h_{1} = h_{0} f^{-1} \) is concordant to \( h_{0} F^{-1} \), which is concordant to \( h_{0} \).
\end{proof}

Combining (3) with the remark at the end of subsection \ref{sec: local contractibility of homeos}, we see that \emph{any pair of non-concordant structuralizations \( M \to V \) determine a homeomorphism \( M \to M \) that cannot be approximated by a \( \pl \)-homeomorphism.}

The existence of such pairs of non-concordant structuralizations follows from the classification of concordance classes of \( \pl \)-structuralizations.
\begin{theorem}[\cite{Kirby-Siebenmann1969--TriangulationManifoldsHauptvermutung}]\label{thm: classification pl structuralizations}
  Given a closed topological manifold \( V \) of dimension at least \( 5 \), the single obstruction to the existence of \( \pl \)-structuralizations on \( V \) is an element of \( H^{4}(V; \Z/2\Z) \).
  If this vanishes, then the set of \( \pl \)-structuralizations up to concordance is in bijection with the group \( H^{3}(V;\Z/2\Z) \).
\end{theorem}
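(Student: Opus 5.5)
This is the Kirby--Siebenmann classification, which the paper cites and does not prove. I would not reprove it from scratch. Instead I would sketch the standard route from \cite{Kirby-Siebenmann1969--TriangulationManifoldsHauptvermutung} and \cite{Rudyak2016--PiecewiseLinearStructuresTopologicalManifolds}, which reduces everything to homotopy theory of classifying spaces.

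\emph{Step 1: the product structure theorem.} For \( \dim V \ge 5 \), concordance classes of \( \pl \)-structuralizations on \( V \) correspond bijectively to isotopy classes of \( \pl \)-structures on \( V \times \R \). This is the stabilization that makes the problem homotopy invariant.

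\emph{Step 2: smoothing theory in the \( \pl \)/\( \mathsf{Top} \) setting.} Let \( \tau_V \colon V \to B\mathsf{Top} \) classify the stable tangent microbundle. By the Kirby--Siebenmann form of the Lashof--Rothenberg argument, concordance classes of \( \pl \)-structuralizations are in bijection with homotopy classes of lifts of \( \tau_V \) through \( B\pl \to B\mathsf{Top} \). This map is a fibration with fiber \( \mathsf{Top}/\pl \).

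\emph{Step 3: compute the fiber.} This is the main obstacle and the deep input. One shows \( \mathsf{Top}/\pl \simeq K(\Z/2\Z, 3) \) using the torus trick and the surgery computation of homotopy \( \pl \)-structures on \( T^n \) and on \( T^{n-k} \times S^k \). The key fact is that \( \pi_3(\mathsf{Top}/\pl) = \Z/2\Z \), detected by the failure of the \( \pl \) Hauptvermutung for \( T^3 \times S^{n-3} \)-type examples, with all other homotopy groups vanishing. I would quote this rather than redo it.

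\emph{Step 4: obstruction theory.} Since the fiber is an Eilenberg--MacLane space \( K(\Z/2\Z,3) \), lifting \( \tau_V \) has a single obstruction in \( H^4(V;\pi_3) = H^4(V;\Z/2\Z) \). This class is the Kirby--Siebenmann invariant. When it vanishes, homotopy classes of lifts form a torsor for \( [V, K(\Z/2\Z,3)] = H^3(V;\Z/2\Z) \). Choosing a base structuralization identifies the set of concordance classes with the group.

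For the application that follows, only one consequence matters. When \( H^3(V;\Z/2\Z) \neq 0 \) (for example \( V = S^3 \times S^{d-3} \)), there exist distinct concordance classes realized on the same \( \pl \)-manifold \( M \), so Lemma~\ref{thm: lem on concordances} applies.
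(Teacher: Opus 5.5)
Your outline is correct and is the standard Kirby--Siebenmann argument that the paper itself only cites without proof: the product structure theorem, the classification of concordance classes by lifts of \( \tau_{V} \) through \( B\pl \to B\mathsf{Top} \), the computation \( \mathsf{Top}/\pl \simeq K(\Z/2\Z,3) \) (where the nontriviality of \( \pi_{3} \) ultimately rests on Rokhlin's theorem), and then obstruction theory. One caveat on your closing remark: \( H^{3}(V;\Z/2\Z) \neq 0 \) by itself gives distinct concordance classes, but not two of them on the same \( \pl \)-manifold \( M \); for \( V = S^{3} \times S^{d-3} \) the paper gets this from Sullivan's Hauptvermutung theorem (Theorem~\ref{thm-Sullivan homeos homotopic to PLH}).
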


This Theorem essentially answers the Hauptvermutung for manifolds.
The domain of each \( \pl \)-structuralization of \( V \) is a \( \pl \)-structure on \( V \), any two of which may or may not be \( \pl \)-homeomorphic to each other.
The set of \( \pl \)-structures on \( V \) is thus a quotient of the set of \( \pl \)-structuralizations.

Earlier work of Sullivan shows that manifolds with more than one structuralization may nonetheless have a unique \( \pl \)-structure.
\begin{theorem}[\cite{Sullivan1967--HauptvermutungManifolds}]\label{thm-Sullivan homeos homotopic to PLH}
  Let \( M, M' \) be closed \( \pl \)-manifolds of dimension \( \ge 5 \), simply connected and two-torsion free in third homology.
  Any homeomorphism \( f \colon M \to M' \) is homotopic to a \( \pl \)-homeomorphism.
  (In particular, \( M \) is \( \pl \)-homeomorphic to \( M' \).)
\end{theorem}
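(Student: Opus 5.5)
The plan is to use the simply connected surgery exact sequence, comparing the \( \pl \) and topological categories through the fibration relating \( TOP/PL \), \( G/PL \) and \( G/TOP \). Regard the homeomorphism \( f \colon M \to M' \) as a homotopy equivalence, that is, as an element \( [f] \) of the \( \pl \) structure set \( \mathcal{S}^{\pl}(M') \). The goal is to show that \( [f] \) is the base point, which says exactly that \( f \) is homotopic to a \( \pl \)-homeomorphism.

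The first step is to reduce this to the vanishing of the normal invariant. Since \( M' \) is simply connected, closed and of dimension \( n \ge 5 \), the surgery exact sequence reads
\[ L_{n+1}(1) \to \mathcal{S}^{\pl}(M') \xrightarrow{\ \eta\ } [M', G/PL] \to L_{n}(1) . \]
Simple connectivity makes the \( s \)-cobordism theorem available with no Whitehead torsion. It also makes the action of \( L_{n+1}(1) \) on the base point trivial: the generators are realized by Milnor or Kervaire plumbings inserted in a disc of \( M' \times [0,1] \), and these change neither the homotopy type nor the \( \pl \) type of the end. Hence it suffices to show \( \eta(f) = 0 \) in \( [M', G/PL] \).

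The second step uses that \( f \) is a homeomorphism. In the topological surgery sequence its normal invariant in \( [M', G/TOP] \) therefore vanishes, since \( f \) has a topological bundle map covering it. The fibration
\[ TOP/PL \to G/PL \to G/TOP \]
has \( TOP/PL \simeq K(\Z/2\Z, 3) \). Its Puppe sequence
\[ [M', \Omega(G/TOP)] \xrightarrow{\ \partial\ } H^{3}(M';\Z/2\Z) \to [M', G/PL] \to [M', G/TOP] \]
then shows that \( \eta(f) \) is the image of a class \( \kappa(f) \in H^{3}(M';\Z/2\Z) \). This class is essentially the Kirby--Siebenmann difference class of the two \( \pl \)-structuralizations, as in Theorem \ref{thm: classification pl structuralizations}. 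It therefore suffices to show that \( \partial \) is surjective, so that the image of \( H^{3}(M';\Z/2\Z) \) in \( [M', G/PL] \) is trivial.

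The third step, which I expect to be the main obstacle, is the homotopy-theoretic computation of \( \partial \). On homotopy groups, the map \( \pi_{4}(G/PL) = \Z \to \pi_{4}(G/TOP) = \Z \) is multiplication by \( 2 \), by Rokhlin's theorem. Consequently \( \Omega(G/TOP) \to TOP/PL \) is surjective on \( \pi_{3} \), namely the reduction \( \Z \to \Z/2\Z \). Working \( 2 \)-locally through the first Postnikov stages of \( G/TOP \), I would identify \( \partial \) on \( M' \) (up to higher-stage corrections that I expect to vanish in this range) with reduction mod \( 2 \),
\[ H^{3}(M';\Z) \to H^{3}(M';\Z/2\Z) . \]
Its cokernel injects via the Bockstein into the \( 2 \)-torsion of \( H^{4}(M';\Z) \). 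By universal coefficients, that torsion is the torsion of \( H_{3}(M') \), and simple connectivity keeps the low-degree groups clean so that no other contributions enter. The hypothesis that \( H_{3} \) has no \( 2 \)-torsion therefore makes reduction mod \( 2 \) surjective. Hence \( \kappa(f) \) maps to zero, \( \eta(f) = 0 \), and the first step gives that \( f \) is homotopic to a \( \pl \)-homeomorphism. The care needed is in checking that the Postnikov \( k \)-invariants of \( G/PL \) (the \( \delta Sq^{2} \) twist at the bottom) do not spoil the identification of \( \partial \) with mod \( 2 \) reduction.
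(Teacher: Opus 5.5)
The paper does not prove this statement; it only cites Sullivan. Sullivan's 1967 argument predates Kirby--Siebenmann and works directly with $G/PL$ through his characteristic-variety analysis of normal invariants. Your route through $TOP/PL \simeq K(\Z/2\Z,3)$ and the fibration $TOP/PL \to G/PL \to G/TOP$ is the standard later argument. It suits this paper, which already relies on Kirby--Siebenmann.

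Your Steps 1 and 2 are fine. The realizations of $L_{n+1}(1)$ only change the structure by connected sum with a PL homotopy sphere, which is standard for $n \ge 5$. And $\eta(f)$ dies in $[M',G/TOP]$, so it comes from $H^3(M';\Z/2\Z)$. The gap is Step 3, which carries all the content and is only asserted.

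Surjectivity of $\partial$ on $\pi_3$ controls only the bottom of the Postnikov tower of $\Omega(G/TOP)$. To produce, for $b \in H^3(M';\Z)$, a map $g \colon M' \to \Omega(G/TOP)$ with $\partial_* g = \rho b$, you must lift $b$ through the whole tower. The obstructions lie in $H^{k+1}(M';\pi_k\Omega(G/TOP))$ for every $k < n$. Since $n$ is arbitrary, there is no ``range'' in which the higher stages can be ignored, and the ``corrections that I expect to vanish'' are exactly what must be proved. Dually, the map $TOP/PL \to G/PL$ could a priori have components in higher Postnikov stages of $G/PL$ (for instance through $\iota_3^2 = Sq^3\iota_3$), and these need not vanish on $\rho b$. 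Also, the $\delta Sq^2$ $k$-invariant of $G/PL$ is not the danger in this direction. It matters only for the converse, that the obstruction is genuinely nonzero when $H_3$ has $2$-torsion.

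The missing input is the theorem (Sullivan, and Kirby--Siebenmann for $G/TOP$) that $G/TOP_{(2)} \simeq \prod_{i \ge 1} K(\Z/2\Z,4i-2) \times K(\Z_{(2)},4i)$. With it, the argument goes as follows.

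First, $\Omega(G/TOP)_{(2)}$ has a factor $K(\Z_{(2)},3)$. The group $H^3(\Omega(G/TOP)_{(2)};\Z/2\Z)$ is spanned by $x^3$ (from the $K(\Z/2\Z,1)$ factor) and $\rho\iota_3$. Surjectivity on $\pi_3$ therefore forces $\partial$ to restrict to $\rho\iota_3$ on that factor. Composing $b$ with the factor inclusion gives $g_{(2)}$ with $\partial_* g_{(2)} = \rho b$.

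To return to integral maps, use three facts:
(i) $\partial = \pm\Omega w$, where $w \colon G/TOP \to K(\Z/2\Z,4)$ classifies the principal fibration $G/PL \to G/TOP$ (its fibre is an Eilenberg--MacLane space and the base is simply connected), so $\partial_*$ is a homomorphism;
(ii) $[M',\Omega(G/TOP)_{(2)}] = [M',\Omega(G/TOP)] \otimes \Z_{(2)}$ for the finite complex $M'$;
(iii) odd multiples act trivially on $H^3(M';\Z/2\Z)$.

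Hence the integral and $2$-local images of $\partial_*$ coincide, and your Bockstein and universal-coefficient argument finishes the proof as written.
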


This is now combined with the previous Theorem to give non-\( \pl \)-approximable homeomorphisms.
In each dimension \( n \ge 5 \), let \( M = S^{3} \times S^{n-3} \), which is smooth and thus admits a \( \pl \)-structure.
From Theorem \ref{thm-Sullivan homeos homotopic to PLH} we see that this \( \pl \)-structure is unique.
On the other hand, by Theorem \ref{thm: classification pl structuralizations} there are at least 2 non-concordant \( \pl \)-structuralizations of \( M \), say \( h_{1}, h_{2} \colon M \to M \).
By Lemma \ref{thm: lem on concordances} and the remark after, the composition \( f = h_{2}^{-1} h_{1} \) is a homeomorphism of \( M \) that is not approximable by any \( \pl \)-homeomorphism.

It may occur that the so obtained \( f \) is orientation reversing, but this can be fixed as follows.
Any reflection of \( S^{3} \) across a hyperplane reverses orientation, and induces likewise an orientation reversing \( \pl \)-homeomorphism \( h \) on \( M \).
The composition \( h f \) is orientation preserving, and continues to not be \( \pl \)-homeomorphism-approximable, because a good \( C^{0} \)-approximation to \( h f \) postcomposes with \( h^{-1} \) to a good approximation of \( f \).

\subsection{The quasiconformal example}
We have so far the existence, in each dimension at least 5, of homeomorphisms \( f \colon M \to M \) that are at least \( \epsilon \)-far from all \( \pl \)-homeomorphisms, some \( \epsilon > 0 \).
It remains to show that \( f \) can be taken quasiconformal.
This follows simply from the fact that all homeomorphisms can be arbitrarily well \( C^{0} \)-approximated by quasiconformal mappings (of large dilatation).

\begin{theorem}[\cites{Sullivan1979--HyperbolicGeometryHomeomorphisms, Tukia-Vaeisaelae1981--LipschitzQuasiconformalApproximationExtension}]\label{thm-QC approximation of homeos}
  Fix \( d \)-dimensional closed manifolds \( M, N \), with \( d \neq 4 \).
  For any homeomorphism \( f \colon M \to N \) and any \( \epsilon > 0 \), there exists a quasiconformal homeomorphism \( f' \colon M \to N \) that is \( \epsilon/2 \)-close to \( f \).
\end{theorem}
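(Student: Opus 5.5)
The plan is to deduce the statement from the Lipschitz approximation theorem of Sullivan and Tukia--V\"ais\"al\"a, and then pass from bilipschitz to quasiconformal. A \( K \)-bilipschitz map is \( K^{2} \)-quasiconformal, as noted in Section \ref{sec: background qc mappings}. So it suffices to produce a bilipschitz homeomorphism \( f' \colon M \to N \), with respect to the fixed Riemannian distances \( d_{M}, d_{N} \), that is \( \epsilon/2 \)-close to \( f \). The large dilatation of \( f' \) is irrelevant for our purposes.

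First, I would regard \( M \) and \( N \) as Lipschitz manifolds. The smooth structure, or equivalently the Riemannian distance, gives an atlas whose transition maps are smooth, hence locally bilipschitz. I would then invoke the main approximation theorem of \cite{Tukia-Vaeisaelae1981--LipschitzQuasiconformalApproximationExtension}, building on Sullivan's \cite{Sullivan1979--HyperbolicGeometryHomeomorphisms} uniqueness of Lipschitz structures in dimensions \( d \neq 4 \). In the form I need, it says: for a homeomorphism \( f \colon M \to N \) between Lipschitz manifolds of dimension \( d \neq 4 \), and any continuous positive function \( \eta \), there is a homeomorphism \( f' \) with \( d_{N}(f(x), f'(x)) < \eta(x) \) that is locally bilipschitz in charts. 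Taking \( \eta \equiv \epsilon/2 \) gives the closeness requirement.

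An equivalent route is through the structure classification. Pull back the Lipschitz structure of \( N \) along \( f \) to obtain a second Lipschitz structure on \( M \). Sullivan's uniqueness theorem then provides a homeomorphism \( h \colon M \to M \) that is Lipschitz from the original structure to the pulled-back one. Moreover, \( h \) can be taken arbitrarily \( C^{0} \)-close to the identity. Then \( f' = f \composed h \) is locally bilipschitz, and it is \( \epsilon/2 \)-close to \( f \) by uniform continuity of \( f \) on the compact manifold \( M \).

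Second, I would upgrade local to global bilipschitz using compactness. Since \( f' \) and its inverse are locally Lipschitz in charts, and the charts are bilipschitz to the Riemannian metrics, finitely many charts cover \( M \) and \( N \). A Lebesgue number argument then gives a uniform Lipschitz constant at small scales. At scales bounded below, the ratio \( d_{N}(f'(x),f'(y))/d_{M}(x,y) \) is bounded above and below by compactness and injectivity of \( f' \). Hence \( f' \) is \( K \)-bilipschitz for some \( K \), and therefore \( K^{2} \)-quasiconformal.

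The main obstacle is entirely contained in the cited deep input: the existence and uniqueness up to small isotopy of Lipschitz structures in dimensions \( d \neq 4 \), which is why that dimension is excluded. Everything else is bookkeeping with compactness. I expect the one delicate point to be ensuring that the approximation provided by \cite{Tukia-Vaeisaelae1981--LipschitzQuasiconformalApproximationExtension} is measured in the \( C^{0} \) sense with respect to \( d_{N} \). This follows since any two compatible metrics on the compact \( N \) induce the same uniform structure.
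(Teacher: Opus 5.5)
Your proposal is correct and follows the paper's route. The paper gives no argument of its own beyond citing Sullivan and Tukia--V\"ais\"al\"a, and your reduction uses those same references: their Lipschitz approximation theorem, then the compactness upgrade from locally to globally bilipschitz, then the remark in Section~\ref{sec: background qc mappings} that \( K \)-bilipschitz maps are \( K^{2} \)-quasiconformal.

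The global upgrade is harmless but not strictly needed. For a homeomorphism of compact spaces, the pointwise distortion \( K(f',x) \) is already at most \( L^{2} \) whenever \( f' \) is \( L \)-bilipschitz on some neighbourhood of \( x \).
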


\bibliography{references}

@book {Ahlfors1966--LecturesQuasiconformalMappings,
    AUTHOR = {Ahlfors, Lars V.},
     TITLE = {Lectures on Quasiconformal Mappings},
    SERIES = {Van Nostrand Mathematical Studies, No. 10},
 PUBLISHER = {D. Van Nostrand Co., Inc., Toronto, Ont.-New York-London},
      YEAR = 1966,
     PAGES = {v+146},
   MRCLASS = {30.47},
  MRNUMBER = 0200442,
MRREVIEWER = {P. Caraman},
}

@article {Cernavski1969--LocalContractibilityGroupHomeomorphismsManifold,
    AUTHOR = {{\v C}ernavski{\u\i}, A. V.},
     TITLE = {Local Contractibility of the Group of Homeomorphisms of a Manifold},
   JOURNAL = {Mat. Sb. (N.S.)},
  FJOURNAL = {Matematicheski{\u\i} Sbornik. Novaya Seriya},
    VOLUME = {79(121)},
      YEAR = 1969,
     PAGES = {307--356},
      ISSN = {0368-8666},
   MRCLASS = {57.05 (54.00)},
  MRNUMBER = 259925,
MRREVIEWER = {D.\ G.\ Ebin},
}

@article {Edwards-Kirby1971--DeformationsSpacesImbeddings,
    AUTHOR = {Edwards, Robert D. and Kirby, Robion C.},
     TITLE = {Deformations of Spaces of Imbeddings},
   JOURNAL = {Ann. of Math. (2)},
  FJOURNAL = {Annals of Mathematics. Second Series},
    VOLUME = 93,
      YEAR = 1971,
     PAGES = {63--88},
      ISSN = {0003-486X},
   MRCLASS = {57.01},
  MRNUMBER = 283802,
MRREVIEWER = {K.\ Lamotke},
       DOI = {10.2307/1970753},
       URL = {https://doi.org/10.2307/1970753},
}

@article {Fletcher-Markovic2012--Decomposing,
    AUTHOR = {Fletcher, Alastair and Markovic, Vladimir},
     TITLE = {Decomposing Diffeomorphisms of the Sphere},
   JOURNAL = {Bull. Lond. Math. Soc.},
  FJOURNAL = {Bulletin of the London Mathematical Society},
    VOLUME = 44,
      YEAR = 2012,
    NUMBER = 3,
     PAGES = {599--609},
      ISSN = {0024-6093,1469-2120},
   MRCLASS = {30C65 (58C99)},
  MRNUMBER = 2967005,
MRREVIEWER = {Kevin\ Wildrick},
       DOI = {10.1112/blms/bdr111},
       URL = {https://doi.org/10.1112/blms/bdr111},
}

@book {Gehring-Martin-Palka2017--IntroductionTheoryHigherDimensionalQuasiconformalMappings,
    AUTHOR = {Gehring, Frederick W. and Martin, Gaven J. and Palka, Bruce
              P.},
     TITLE = {An Introduction to the Theory of Higher-Dimensional Quasiconformal Mappings},
    SERIES = {Mathematical Surveys and Monographs},
    VOLUME = 216,
 PUBLISHER = {American Mathematical Society, Providence, RI},
      YEAR = 2017,
     PAGES = {ix+430},
      ISBN = {978-0-8218-4360-4},
   MRCLASS = {30-02 (30C62 30C65)},
  MRNUMBER = 3642872,
MRREVIEWER = {David Matthew Freeman},
       DOI = {10.1090/surv/216},
       URL = {https://doi.org/10.1090/surv/216},
}

@article {Gehring1962--RingsQuasiconformalMappingsSpace,
    AUTHOR = {Gehring, F. W.},
     TITLE = {Rings and Quasiconformal Mappings in Space},
   JOURNAL = {Trans. Amer. Math. Soc.},
  FJOURNAL = {Transactions of the American Mathematical Society},
    VOLUME = 103,
      YEAR = 1962,
     PAGES = {353--393},
      ISSN = {0002-9947},
   MRCLASS = {30.47},
  MRNUMBER = 139735,
MRREVIEWER = {L. V. Ahlfors},
       DOI = {10.2307/1993834},
       URL = {https://doi.org/10.2307/1993834},
}

@inproceedings {Gehring1987--TopicsQuasiconformalMappings,
    AUTHOR = {Gehring, F. W.},
     TITLE = {Topics in Quasiconformal Mappings},
 BOOKTITLE = {Proceedings of the {I}nternational {C}ongress of
              {M}athematicians, {V}ol. 1, 2 ({B}erkeley, {C}alif., 1986)},
     PAGES = {62--80},
 PUBLISHER = {Amer. Math. Soc., Providence, RI},
      YEAR = 1987,
   MRCLASS = {30C60 (30-02)},
  MRNUMBER = 934216,
MRREVIEWER = {Matti Vuorinen},
}

@incollection {Gehring2005--QuasiconformalMappingsEuclideanSpaces,
    AUTHOR = {Gehring, F. W.},
     TITLE = {Quasiconformal Mappings in {E}uclidean Spaces},
 BOOKTITLE = {Handbook of complex analysis: geometric function theory.
              {V}ol. 2},
     PAGES = {1--29},
 PUBLISHER = {Elsevier Sci. B. V., Amsterdam},
      YEAR = 2005,
   MRCLASS = {30C65 (30F45 35J60)},
  MRNUMBER = 2121856,
MRREVIEWER = {Leonid V. Kovalev},
       DOI = {10.1016/S1874-5709(05)80005-8},
       URL = {https://doi.org/10.1016/S1874-5709(05)80005-8},
}

@article{He-Liu2019--FactoringHigherDimensionalQuasiconformalMappings,
    AUTHOR = {He, Zhengxu and Liu, Jinsong},
     TITLE = {Factoring the Higher Dimensional Quasiconformal Mappings},
   JOURNAL = {Trans. Amer. Math. Soc.},
  FJOURNAL = {Transactions of the American Mathematical Society},
    VOLUME = 372,
      YEAR = 2019,
    NUMBER = 8,
     PAGES = {5341--5353},
      ISSN = {0002-9947},
   MRCLASS = {30C65 (32A30)},
  MRNUMBER = 4014278,
MRREVIEWER = {Anca Andrei},
       DOI = {10.1090/tran/7679},
       URL = {https://doi.org/10.1090/tran/7679},
}

@book {Hudson1969--PiecewiseLinearTopology,
    AUTHOR = {Hudson, J. F. P.},
     TITLE = {Piecewise Linear Topology},
 PUBLISHER = {W. A. Benjamin, Inc., New York-Amsterdam},
      YEAR = 1969,
     PAGES = {ix+282},
   MRCLASS = {57.01},
  MRNUMBER = 248844,
MRREVIEWER = {R.\ B.\ Sher},
}

@article {Kirby-Siebenmann1969--TriangulationManifoldsHauptvermutung,
    AUTHOR = {Kirby, R. C. and Siebenmann, L. C.},
     TITLE = {On the Triangulation of Manifolds and the {H}auptvermutung},
   JOURNAL = {Bull. Amer. Math. Soc.},
  FJOURNAL = {Bulletin of the American Mathematical Society},
    VOLUME = 75,
      YEAR = 1969,
     PAGES = {742--749},
      ISSN = {0002-9904},
   MRCLASS = {57.01},
  MRNUMBER = 242166,
MRREVIEWER = {R. K. Lashof},
       DOI = {10.1090/S0002-9904-1969-12271-8},
       URL = {https://doi.org/10.1090/S0002-9904-1969-12271-8},
}

@incollection {Martin2014--TheoryQuasiconformalMappingsHigherDimensionsI,
    AUTHOR = {Martin, Gaven J.},
     TITLE = {The Theory of Quasiconformal Mappings in Higher Dimensions, {I}},
 BOOKTITLE = {Handbook of {T}eichm\"{u}ller theory. {V}ol. {IV}},
    SERIES = {IRMA Lect. Math. Theor. Phys.},
    VOLUME = 19,
     PAGES = {619--677},
 PUBLISHER = {Eur. Math. Soc., Z\"{u}rich},
      YEAR = 2014,
   MRCLASS = {30C65 (35J62 35J92 37F50)},
  MRNUMBER = 3289712,
MRREVIEWER = {Jarmo J\"{a}\"{a}skel\"{a}inen},
       DOI = {10.4171/117-1/15},
       URL = {https://doi.org/10.4171/117-1/15},
}

@article {Moise1952--AffineStructuresManifoldsIV,
    AUTHOR = {Moise, Edwin E.},
     TITLE = {Affine Structures in {$3$}-Manifolds. {IV}. {P}iecewise Linear Approximations of Homeomorphisms},
   JOURNAL = {Ann. of Math. (2)},
  FJOURNAL = {Annals of Mathematics. Second Series},
    VOLUME = 55,
      YEAR = 1952,
     PAGES = {215--222},
      ISSN = {0003-486X},
   MRCLASS = {56.0X},
  MRNUMBER = 46644,
MRREVIEWER = {S.\ S.\ Cairns},
       DOI = {10.2307/1969775},
       URL = {https://doi.org/10.2307/1969775},
}

@article {Nash1956--ImbeddingProblemRiemannianManifolds,
    AUTHOR = {Nash, John},
     TITLE = {The Imbedding Problem for {R}iemannian Manifolds},
   JOURNAL = {Ann. of Math. (2)},
  FJOURNAL = {Annals of Mathematics. Second Series},
    VOLUME = 63,
      YEAR = 1956,
     PAGES = {20--63},
      ISSN = {0003-486X},
   MRCLASS = {53.1X},
  MRNUMBER = 75639,
MRREVIEWER = {J.\ Schwartz},
       DOI = {10.2307/1969989},
       URL = {https://doi.org/10.2307/1969989},
}

@book {Rourke-Sanderson1972--IntroductionPiecewiseLinearTopology,
    AUTHOR = {Rourke, C. P. and Sanderson, B. J.},
     TITLE = {Introduction to Piecewise-Linear Topology},
    SERIES = {Ergebnisse der Mathematik und ihrer Grenzgebiete [Results in
              Mathematics and Related Areas]},
    VOLUME = {Band 69},
 PUBLISHER = {Springer-Verlag, New York-Heidelberg},
      YEAR = 1972,
     PAGES = {viii+123},
   MRCLASS = {57CXX},
  MRNUMBER = 350744,
MRREVIEWER = {R.\ C.\ Kirby},
}

@book {Rudyak2016--PiecewiseLinearStructuresTopologicalManifolds,
    AUTHOR = {Rudyak, Yuli},
     TITLE = {Piecewise Linear Structures on Topological Manifolds},
 PUBLISHER = {World Scientific Publishing Co. Pte. Ltd., Hackensack, NJ},
      YEAR = 2016,
     PAGES = {xxii+106},
      ISBN = {978-981-4733-78-6},
   MRCLASS = {57Q25 (55R25 57Q15 57Qxx)},
  MRNUMBER = 3467983,
MRREVIEWER = {Laurence\ R.\ Taylor},
       DOI = {10.1142/9887},
       URL = {https://doi.org/10.1142/9887},
}

@article {Shikata1966--DistanceFunctionSetDifferentiableStructures,
    AUTHOR = {Shikata, Yoshihiro},
     TITLE = {On a Distance Function on the Set of Differentiable Structures},
   JOURNAL = {Osaka Math. J.},
  FJOURNAL = {Osaka Mathematical Journal},
    VOLUME = 3,
      YEAR = 1966,
     PAGES = {65--79},
      ISSN = {0388-0699},
   MRCLASS = {57.10},
  MRNUMBER = 202149,
MRREVIEWER = {R.\ H.\ Szczarba},
       URL = {http://projecteuclid.org/euclid.ojm/1200691577},
}

@article {Sullivan1967--HauptvermutungManifolds,
    AUTHOR = {Sullivan, Dennis},
     TITLE = {On the {H}auptvermutung for Manifolds},
   JOURNAL = {Bull. Amer. Math. Soc.},
  FJOURNAL = {Bulletin of the American Mathematical Society},
    VOLUME = 73,
      YEAR = 1967,
     PAGES = {598--600},
      ISSN = {0002-9904},
   MRCLASS = {57.05},
  MRNUMBER = 212811,
MRREVIEWER = {Edward\ M.\ Brown},
       DOI = {10.1090/S0002-9904-1967-11764-6},
       URL = {https://doi.org/10.1090/S0002-9904-1967-11764-6},
}

@inproceedings {Sullivan1979--HyperbolicGeometryHomeomorphisms,
    AUTHOR = {Sullivan, Dennis},
     TITLE = {Hyperbolic Geometry and Homeomorphisms},
 BOOKTITLE = {Geometric topology ({P}roc. {G}eorgia {T}opology {C}onf.,
              {A}thens, {G}a., 1977)},
     PAGES = {543--555},
 PUBLISHER = {Academic Press, New York-London},
      YEAR = 1979,
   MRCLASS = {57N99 (30C60 57Q25 57R15 58A12 58G10)},
  MRNUMBER = 537749,
MRREVIEWER = {Christopher W. Stark},
}

@article {Tukia-Vaeisaelae1981--LipschitzQuasiconformalApproximationExtension,
    AUTHOR = {Tukia, P. and V\"ais\"al\"a, J.},
     TITLE = {Lipschitz and Quasiconformal Approximation and Extension},
   JOURNAL = {Ann. Acad. Sci. Fenn. Ser. A I Math.},
  FJOURNAL = {Annales Academiae Scientiarum Fennicae. Series A I.
              Mathematica},
    VOLUME = 6,
      YEAR = 1981,
    NUMBER = 2,
     PAGES = {303--342},
      ISSN = {0066-1953},
   MRCLASS = {57Q25 (30C60 54E40 57Q30)},
  MRNUMBER = 658932,
MRREVIEWER = {David\ C.\ Wilson},
       DOI = {10.5186/aasfm.1981.0626},
       URL = {https://doi.org/10.5186/aasfm.1981.0626},
}

@book {Vaisala1971--LecturesDimensionalQuasiconformalMappings,
    AUTHOR = {V\"{a}is\"{a}l\"{a}, Jussi},
     TITLE = {Lectures on {$n$}-Dimensional Quasiconformal Mappings},
    SERIES = {Lecture Notes in Mathematics, Vol. 229},
 PUBLISHER = {Springer-Verlag, Berlin-New York},
      YEAR = 1971,
     PAGES = {xiv+144},
   MRCLASS = {30A60},
  MRNUMBER = 0454009,
MRREVIEWER = {F. W. Gehring},
}

@article {Whitehead1940--C1Complexes,
    AUTHOR = {Whitehead, J. H. C.},
     TITLE = {On {$C^1$}-Complexes},
   JOURNAL = {Ann. of Math. (2)},
  FJOURNAL = {Annals of Mathematics. Second Series},
    VOLUME = 41,
      YEAR = 1940,
     PAGES = {809--824},
      ISSN = {0003-486X},
   MRCLASS = {56.0X},
  MRNUMBER = 2545,
MRREVIEWER = {R.\ H.\ Fox},
       DOI = {10.2307/1968861},
       URL = {https://doi.org/10.2307/1968861},
}

@article {Whitney1936--DifferentiableManifolds,
    AUTHOR = {Whitney, Hassler},
     TITLE = {Differentiable Manifolds},
   JOURNAL = {Ann. of Math. (2)},
  FJOURNAL = {Annals of Mathematics. Second Series},
    VOLUME = 37,
      YEAR = 1936,
    NUMBER = 3,
     PAGES = {645--680},
      ISSN = {0003-486X,1939-8980},
   MRCLASS = {99-04},
  MRNUMBER = 1503303,
       DOI = {10.2307/1968482},
       URL = {https://doi.org/10.2307/1968482},
}

@book {Whitney1957--GeometricIntegrationTheory,
    AUTHOR = {Whitney, Hassler},
     TITLE = {Geometric Integration Theory},
 PUBLISHER = {Princeton University Press, Princeton, NJ},
      YEAR = 1957,
     PAGES = {xv+387},
   MRCLASS = {53.0X},
  MRNUMBER = 87148,
MRREVIEWER = {H.\ Samelson},
}
\end{document}